\font\elevensf=cmss10 scaled\magstephalf
\newtheorem{theorem} {\bf THEOREM}[section]
\newtheorem{counter example} {\bf Counter Example}[section]
\newtheorem{definition} {\bf DEFINITION}[section]
\def\CC{{\rm \kern.24em \vrule width.02em height1.4ex depth-.05ex \kern-.26emC}}
\def\TagOnRight
\def\AA{{it I} \hskip-3pt{\tt A}}
\def\QQ{\rlap {\raise 0.4ex \hbox{$\scriptscriptstyle |$}} {\hskip -0.1em Q}}
\newcommand{\be} {\begin{equation}}
\newcommand{\ee} {\end{equation}}
\newcommand{\bea} {\begin{eqnarray}}
\newcommand{\eea} {\end{eqnarray}}
\newcommand{\Bea} {\begin{eqnarray*}}
\newcommand{\Eea} {\end{eqnarray*}}
\def\theequation{\@arabic{\c@section}.\@arabic{\c@equation}}
\title{BV regularity near the interface for nonuniform convex discontinuous flux}
\author{ Shyam Sundar
Ghoshal\footnote{ E-mail:shyam.ghoshal@gssi.infn.it.
 Acknowlegements: The author would like to 
thank the anonymous referee for providing fruitful suggestions in the present article.}\\\\
Gran Sasso Science Institute, \\Viale Francesco Crispi, 7,\\ 67100 L’Aquila, Italy.}
\date{}
\begin{document}

\maketitle

\begin{abstract} In this paper, we discuss the total variation bound for the solution of scalar conservation laws with discontinuous
flux. We prove the smoothing effect of the equation forcing the $BV_{loc}$ solution 
near the interface for $L^\infty$ initial data without the assumption on the uniform convexity of the fluxes made as in \cite{ARGS, ghoshaljde}.
The proof relies on the method of characteristics and the explicit formulas. 
\end{abstract}

\noindent Key words:  Hamilton-Jacobi equation, conservation laws, discontinuous flux, explicit formula, characteristics, 
 BV function.

\section{Introduction}

Let us consider the following conservation laws  with discontinuous flux,
\begin{equation}\label{bv11}
 \begin{array}{lllll}
  u_t+F(x,u)&=0\\
  u(x,0)&=u_0(x),
 \end{array}
\end{equation}
 the flux $F$ is given by, $F(x,u)=H(x)f(u)+(1-H(x))g(u)$, $H$ is the Heaviside function.
Throughout this paper we assume the fluxes $f,g$ to be $C^2(\mathbb{R}),$ strictly convex, superlinear growth (see definition \ref{lt}) 
and $u_0\in L^\infty$.
 We denote by $f_+^{-1},g_+^{-1}$ to be the inverses of the increasing part of $f$, $g$ respectively and similarly $f_-^{-1},g_-^{-1}$,
 the inverses of the decreasing part of  $f,g$ respectively. 
 Let $\theta_f, \theta_g$ be the unique minimums of the fluxes $f$ and $g$ respectively. By uniform convexity of the fluxes $f,g$, we 
 mean $f,g\in C^2$ and there exists a positive
 constant $\alpha$ such that $f^{\prime\prime}>\alpha,\ g^{\prime\prime}>\alpha.$

\par The first order partial differential equation of type (\ref{bv11}) has many applications namely, modeling gravity, continuous sedimentation in
a clarifier-thickener unit, petroleum industry, traffic flow on a highway, semiconductor industry. For more details, one can see
\cite{burKarlKlinRis, BurgKarRis, BurKarRisTow, BurKarTow, diehlSIAM, diehlengrg, geo, mochon} and the references therein.

\par It is well known that even if the flux and the initial data are sufficiently smooth
the global classical solution of scalar conservation laws, does not exist always, which allows to define the weak notion of the solution.
In general, there might be infinitely many weak solutions even for smooth flux. Moreover, 
the well-posedness theory for the Cauchy problem for scalar conservation laws with Lipschitz flux function 
was completely settled by Kruzkov \cite{kruzkov}. 

\par In past few decades, the Cauchy problem for  conservation laws with discontinuous flux of the type (\ref{bv11}) has been well studied, 
it has been tackled in several ways. For convergence analysis of several important  numerical schemes we refer to
\cite{siam, jhde, mathcomp, And2, karlsen, burger, BurgKarRis, BurKarRisTow, towers}. The solution of (\ref{bv11}) can be achieved by  
vanishing viscosity limit \cite{karlRisTowers,And1}, front tracking (\cite{burKarlKlinRis,gimseresebro, gimse, KlinRis}).
On the other hand, one can obtain the solution of (\ref{bv11}) also via Hamilton-Jacobi equation 
\cite{Kyoto, jde, ostrov}, which is the one we exploit in the present paper. 
Consider the following Hamilton-Jacobi equation  
\begin{equation}\label{bv12}
\begin{array}{lllll}
v_t+g(v_x)&=&0 \ \ \mbox{if}\  x<0, t>0,\\
v_t+f(v_x)&=&0 \ \ \mbox{if} \ x>0, t>0,\\
v(x,0)&=&v_0 \  \mbox{if}\ x\in\mathbb{R}.
\end{array}
\end{equation}
\par In \cite{jde}, they dealt with the Hamilton-Jacobi equation (\ref{bv12}) and proved the existence of the global Lipschitz solution
of (\ref{bv12}). Then $u=v_x$, solves the  discontinuous conservation laws (\ref{bv11}) 
with the initial data $v_{0_x}=u_0$. Also they proved the existence of infinitely many stable semigroups of entropy solutions based on
$(A,B)$ interface entropy condition (see definition \ref{definitionInter}). For any such choice of $A,B$, they have given an explicit representation of the
entropy solution for the conservation law (\ref{bv11}). Throughout our paper we use the $(A,B)$ entropy solution 
(see definition \ref{definitionSol}) obtained as in \cite{Kyoto, jde}.

\par Despite the fact that the subject is well studied, the total variation bound near the interface remained unsolved for quite a long time.
In \cite{karlsen}, they observed that the solution stays in BV, away from the  interface $x=0$.
Recently, in \cite{ARGS,ghoshaljde} the question regarding the total variation bound near
interface has been answered. 
In general, BV regularity of the solution is an extremely important phenomenon in the theory of conservation laws. 
For the case when $f=g$ (not necessarily convex), total variation diminishing (TVD) property holds, that is to say,
$TV(u(\cdot, t))\leq TV(u_0(\cdot))$, for all $t>0.$ Definitely, one cannot expect to have similar property in the case when $f\neq g,$ because 
of the fact that constant initial data may lead to a non constant solution. 

\par The first breakthrough results regarding the total variation bound have been obtained in \cite{ARGS}. 
First it was observed in \cite{ARGS} that the critical points plays a key role for the existence and nonexistence of 
total variation bound. It has been shown in \cite{ARGS} that if the connections $(A,B)$ avoid the critical points $(\theta_g,\theta_f)$
and $u_0\in BV$, then 
the BV regularity holds. 
They observed in \cite{ARGS} that $u_0\in BV$ is not enough, one needs to assume also
$f^{-1}_+g(u_0),g^{-1}_-f(u_0)\in BV$ in order to prove that the solution is  BV  near the interface. 
Also they have constructed a counter example 
by rarefaction waves and shock waves separated by constant states near the interface 
to prove that total variation of the solution at time $t=1$ may blow up even if $u_0\in BV.$
Hence the assumption $f^{-1}_+g(u_0),g^{-1}_-f(u_0) \in BV$
are important. 

\par Later on in \cite{ghoshaljde}, it has been proved that 
if the connections $(A,B)$ avoid the critical points $(\theta_g,\theta_f)$ and $f^{\prime\prime},g^{\prime\prime}\geq \alpha>0,$ then $u\in BV$ for $u_0\in L^\infty.$
Also a very strong and surprising   result has been obtained in \cite{ghoshaljde}
that if the lower heights of both the fluxes are same that is if $f(\theta_f)=g(\theta_g),$ then the solution stays
in BV near the interface,
even if the fluxes are not uniformly convex or even if $u_0\notin BV.$ On the other hand, 
if $f(\theta_f)\neq g(\theta_g),$ then the results does not hold in general (for instance the counter example in \cite{ARGS} at $t=1$).
If  $f(\theta_f)\neq g(\theta_g),$ then one needs to put an extra assumption that
Supp $u_0$ is compact, then the result holds true for uniformly convex flux, but for large time.
One can not avoid to 
assume that the initial data $u_0$ is compactly supported due to the counter example (in \cite{ghoshaljde}),
which shows that even for uniformly convex fluxes,  $u(x,T_n)\notin BV$ while $u_0\in BV$, for all $n$
and $\lim\limits_{n\rightarrow\infty}T_n=\infty$. 

\par When $f=g,$ we have the following Lax-Oleinik formula: 
\begin{theorem}\label{thm1.1} (Lax-Oleinik formula) Let the initial data $u_0\in L^\infty(\mathbb{R})$. If the flux  $f$  is $C^2$, uniformly convex
and of super linear growth. Then 
there exists a function  $y (x, t)$ such that   
\begin{enumerate}
 \item $x\mapsto y(x,t)$ is non decreasing.
 \item  For a.e. $(x,t)\in \mathbb{R}\times (0,\infty)$, the solution $u$ of (\ref{bv11})  is given by  
 \begin{align} u (x,t)  = (f^\prime)^{-1}\left(\frac{x-y (x,t)}{t}\right).  \label{eq1.2} \end{align}  
\end{enumerate}\end{theorem}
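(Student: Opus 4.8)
The plan is to derive the Lax--Oleinik formula from the Hopf--Lax representation of the viscosity solution of the Hamilton--Jacobi equation $v_t + f(v_x) = 0$, $v(x,0) = v_0$, where $v_0$ is chosen so that $v_{0_x} = u_0$ (possible since $u_0 \in L^\infty$, so $v_0$ is Lipschitz). Since $f$ is $C^2$, uniformly convex, and of superlinear growth, its Legendre transform $f^*(p) = \sup_q (pq - f(q))$ is finite, $C^1$, and also superlinear, and the unique viscosity solution is given by the Hopf--Lax formula
\begin{equation}\label{eqplan1}
v(x,t) = \min_{y \in \mathbb{R}} \left\{ v_0(y) + t\, f^*\!\left(\frac{x-y}{t}\right) \right\}.
\end{equation}
First I would record that the minimum in \eqref{eqplan1} is attained (coercivity from superlinearity of $f^*$ plus Lipschitz bound on $v_0$), define $y(x,t)$ to be, say, the smallest minimizer, and verify the classical monotonicity claim that $x \mapsto y(x,t)$ is non-decreasing. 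The monotonicity is the standard cross-comparison argument: if $x_1 < x_2$ with minimizers $y_1 = y(x_1,t)$ and $y_2 = y(x_2,t)$, then writing out the two minimality inequalities (testing $x_1$ against $y_2$ and $x_2$ against $y_1$) and adding them yields, after using strict convexity of $f^*$, that $y_1 \le y_2$; strict convexity is what forces the inequality rather than just consistency, and it is also what pins down uniqueness of the minimizer for a.e.\ $x$.

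Next I would extract the pointwise formula \eqref{eq1.2}. At a point $(x,t)$ where $v$ is differentiable in $x$ (which is a.e.\ $x$ for fixed $t>0$, since $v(\cdot,t)$ is Lipschitz hence differentiable a.e.) and where the minimizer $y(x,t)$ is the interior critical point, the first-order optimality condition in \eqref{eqplan1} gives $(f^*)'\!\left(\frac{x-y(x,t)}{t}\right) = v_{0_x}(y(x,t))$ at points of differentiability of $v_0$, but more robustly one differentiates \eqref{eqplan1} in $x$: by the envelope theorem $v_x(x,t) = (f^*)'\!\left(\frac{x-y(x,t)}{t}\right)$. Since $u = v_x$ solves \eqref{bv11} with $f=g$, and since for a uniformly convex superlinear $f$ one has the Legendre duality identity $(f^*)' = (f')^{-1}$ on the relevant range, this rewrites as
\begin{equation}\label{eqplan2}
u(x,t) = v_x(x,t) = (f')^{-1}\!\left(\frac{x - y(x,t)}{t}\right),
\end{equation}
which is exactly \eqref{eq1.2}. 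One should check that the set of $(x,t)$ excluded — where $v$ fails to be differentiable in $x$, or the minimizer is non-unique, or lies at the boundary behaviour forced by $v_0$ merely Lipschitz — has measure zero in $\mathbb{R}\times(0,\infty)$; this follows from Rademacher's theorem applied slicewise together with Fubini, since the non-differentiability set of $v(\cdot,t)$ is $\mathcal{L}^1$-null for each $t$.

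I expect the main obstacle to be the regularity bookkeeping rather than any single inequality: namely, justifying the envelope/differentiation step cleanly when $v_0$ is only Lipschitz (so $v_{0_x}=u_0$ exists only a.e.), ensuring that the a.e.\ set where \eqref{eqplan2} holds genuinely has full measure in the plane, and reconciling ``the smallest minimizer'' with ``the a.e.\ unique minimizer'' so that the single function $y(x,t)$ simultaneously serves the monotonicity statement (item 1) and the representation (item 2). A secondary point requiring care is identifying the correct branch: $(f')^{-1}$ is multivalued on all of $\mathbb{R}$ since $f$ has a minimum at $\theta_f$, but $(f^*)'$ selects a well-defined value, and one must note that the argument $\frac{x-y(x,t)}{t}$ ranges over the image of $f'$ so that $(f')^{-1}$ of it is meaningful — this is automatic from $(f^*)' = (f')^{-1}$ and the fact that $(f^*)'$ takes values in the domain of $f'$. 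Once these measure-theoretic and branch-selection details are in place, the proof is the standard Hopf--Lax argument.
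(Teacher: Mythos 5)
Your proposal is the standard Hopf--Lax derivation of the Lax--Oleinik formula, and it is essentially correct; note that the paper itself states Theorem \ref{thm1.1} as a classical background result (with $f=g$) and gives no proof, so there is no in-paper argument to compare against --- the machinery you invoke (Legendre transform, value function, minimizer monotonicity via the cross-comparison/quadrangle inequality, envelope differentiation at points of differentiability of $v(\cdot,t)$) is exactly the framework the paper recalls in Section 2 and builds on for the two-flux case. One small correction: your ``branch selection'' worry is a non-issue for the reason you half-state --- $(f^\prime)^{-1}$ is single-valued on all of $\mathbb{R}$ because $f^\prime$ is strictly increasing and, under uniform convexity, surjective; it is $f^{-1}$ (inverse of $f$ itself), not $(f^\prime)^{-1}$, that is two-branched on account of the minimum at $\theta_f$, which is why the paper introduces $f_\pm^{-1}$ but needs no such device for $(f^\prime)^{-1}=(f^*)^\prime$.
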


\par An immediate  observation from theorem \ref{thm1.1}  is the following:

\noindent If $f^{\prime\prime}\geq \alpha>0$ and   then for all $ t > 0, \  x 
\mapsto u (x, t)$ is  in $ B V_{loc} (\mathbb{R}),$ even if $u_0\notin BV.$ When the flux is not uniformly convex, in the case  $f=g$,
BV regularity does not hold for $u_0\in L^\infty$, even 
for large time (one can see a counter example in \cite{Ssbv}, though $f^\prime(u)\in BV_{loc}$). For finer properties of characteristics in the case
$f=g$,
we refer to \cite{Ssbv,Ssh,ghoshal,duke,lax,Serre}.

\par When $f\neq g$,  Lax-Oleinik type formula still holds: 

\begin{theorem} (Adimurthi, Gowda \cite{Kyoto}, \cite{jde})
 Let $u_0\in L^{\infty}(\mathbb{R})$, then there exists an entropy solution $u$ of (\ref{bv11})  with $u\in L^{\infty}(\mathbb{R}).$  Also there exist 
 Lipschitz continuous functions $R(t)\geq 0,L(t)\leq 0.$
 and  monotone functions $y_{\pm}(x,t), \ t_{\pm}(x,t)$ such that 

 For  a fixed $t>0,$ 
 \begin{itemize}
  \item  $t\geq t_+(x,t)\geq 0$ is a non-increasing function of $x$ in $ [0,R(t)), $
  \item $ y_+(x,t)\geq 0$ is a non-decreasing function of $x$ in $ [R(t),\infty), $
   \item $t\geq t_-(x,t)\geq 0$ is a non-decreasing function of $x$ in $ (L(t),0], $
    \item $ y_+(x,t)\leq 0$ is a non-decreasing function  of $x$ in $ (-\infty,L(t)). $
 \end{itemize}
\begin{eqnarray}
 u(x,t)=
\left\{\begin{array}{lll} (f^{\prime})^{-1}\left(\frac{x-y_+(x,t)}{t}\right) &\mbox{ if }& x\geq R(t),\\
(f^{\prime})^{-1}\left(\frac{x}{t-t_+(x,t)}\right) &\mbox{ if }& 0\leq x<R(t).
\end{array}\right.\label{r2.12}
\end{eqnarray}

\begin{eqnarray}
 u(x,t)=
\left\{\begin{array}{lll}
 (g^{\prime})^{-1}\left(\frac{x-y_-(x,t)}{t}\right) &\mbox{ if }& x\leq L(t), \\
(g^{\prime})^{-1}\left(\frac{x}{t-t_-(x,t)}\right) &\mbox{ if}& L(t)<x<0.
\end{array}\right.\label{r2.13}
\end{eqnarray}

\end{theorem}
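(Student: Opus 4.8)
\medskip
\noindent\textbf{Proof sketch.} The plan is to realise $u$ as the $x$-derivative of the value function of a Hopf--Lax type variational problem attached to the Hamilton--Jacobi equation (\ref{bv12}), and then to read the representation formulas (\ref{r2.12})--(\ref{r2.13}) and the monotonicity of $y_\pm,t_\pm$ off the geometry of the minimising trajectories, in the spirit of Theorem \ref{thm1.1} but now allowing the characteristics to refract at the interface $x=0$.

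First I would set $v_0(x)=\int_0^x u_0(s)\,ds$, so that $v_0$ is Lipschitz with $\|v_0'\|_\infty=\|u_0\|_\infty$, and introduce the Legendre transforms $f^\ast,g^\ast$ of $f,g$. Since $f,g\in C^2$ are strictly convex with superlinear growth, $f^\ast,g^\ast$ are finite, $C^1$ and strictly convex, with $(f^\ast)'=(f')^{-1}$ and $(g^\ast)'=(g')^{-1}$. In each half-plane the characteristics of (\ref{bv12}) are straight lines carrying their slope data, so a backward optimal path from a point $(x,t)$ with $x>0$ is of one of two types: (i) a single segment staying in $\{x>0\}$ and reaching $\{t=0\}$ at a point $(y,0)$ with $y\ge 0$; or (ii) a broken line, namely a segment from $(x,t)$ to an interface point $(0,\tau)$ with $0<\tau<t$, continued backward into $\{x<0\}$. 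Accordingly one defines, for $x>0$,
\[
v(x,t)=\min\Big\{\ \inf_{y\ge 0}\big[v_0(y)+t\,f^\ast\big(\tfrac{x-y}{t}\big)\big],\ \ \inf_{0<\tau<t}\big[w(\tau)+(t-\tau)\,f^\ast\big(\tfrac{x}{t-\tau}\big)\big]\ \Big\},
\]
where $w(\tau):=v(0,\tau)$ is the a priori unknown interface trace, pinned down self-consistently by the symmetric coupled minimisation on $\{x<0\}$ with $g,g^\ast$. One then shows the infima are attained, uniquely for a.e.\ $(x,t)$, that $v$ is locally Lipschitz (its gradient bound coming from $\|u_0\|_\infty$ together with finite speed of propagation), that $v$ is the viscosity solution of (\ref{bv12}) for the admissible interface condition, and hence that $u:=v_x\in L^\infty$ is an entropy solution of (\ref{bv11}).

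The formulas follow by differentiation. For $x>0$ in case (i), writing $y_+(x,t)$ for the minimiser, the first-order condition and an envelope argument give $u(x,t)=(f^\ast)'\big(\tfrac{x-y_+(x,t)}{t}\big)=(f')^{-1}\big(\tfrac{x-y_+(x,t)}{t}\big)$; in case (ii), with minimiser $t_+(x,t)\in(0,t)$, one gets $u(x,t)=(f^\ast)'\big(\tfrac{x}{t-t_+(x,t)}\big)=(f')^{-1}\big(\tfrac{x}{t-t_+(x,t)}\big)$, and the threshold $R(t)$ is precisely the value of $x>0$ at which the two alternatives exchange dominance --- equivalently the position at time $t$ of the extreme generalised characteristic issuing from the interface. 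The symmetric discussion on $\{x<0\}$ produces $y_-,t_-$ and $L(t)$, with $y_-(x,t)\le 0$. For the monotonicity: two distinct backward optimal characteristics from $(x_1,t)$ and $(x_2,t)$ with $x_1<x_2$ cannot cross inside $\{x>0\}$, since a crossing would permit swapping their tails so as to strictly lower the total cost, contradicting minimality (and a.e.\ uniqueness). This forces $x\mapsto y_+(x,t)$ non-decreasing on $[R(t),\infty)$ and $x\mapsto t_+(x,t)$ non-increasing on $[0,R(t))$, and likewise for $y_-,t_-$ on $\{x<0\}$. The Lipschitz dependence of $R$ and $L$ on $t$ then follows from the uniform bound on characteristic speeds and the convexity comparison of slopes, $R$ and $L$ being themselves Lipschitz generalised characteristics through $x=0$.

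I expect the main obstacle to be the rigorous handling of the interface trace $w(\tau)=v(0,\tau)$: one must show that the coupled self-referential minimisation is well posed, that it selects the entropy-admissible interface behaviour, and that the resulting $v_x$ is a genuine entropy solution and not merely a weak one. Intertwined with this is the no-crossing property for optimal paths \emph{up to and across} $x=0$; near the interface the tail-exchange argument must be run with broken trajectories and with the $f$- and $g$-actions simultaneously, which is appreciably more delicate than the classical uniformly convex, $f=g$ situation underlying Theorem \ref{thm1.1}.
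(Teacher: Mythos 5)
Your sketch targets the right framework, but note first that the paper does not prove this theorem at all: it is quoted from Adimurthi--Gowda and Adimurthi--Mishra--Gowda (\cite{Kyoto}, \cite{jde}), and what the paper supplies instead (Section 2) is the finished machinery of that proof --- control curves with at most three linear pieces, the cost functionals $J_\pm$, the interface functions $b_\pm$, the boundary data $\lambda_\pm$, and the value functions $v_\pm$. Measured against that construction, your outline follows the same Hopf--Lax route in each half-plane and the same no-crossing argument for the monotonicity of $y_\pm$, $t_\pm$, so the overall strategy is the intended one.

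The genuine gap is exactly the point you flag as ``the main obstacle'' and then leave open: the interface trace. Your coupled, self-referential minimisation $v(x,t)=\min\{\dots,\ \inf_\tau [\,w(\tau)+(t-\tau)f^\ast(x/(t-\tau))\,]\}$ with $w(\tau)=v(0,\tau)$ is not how the construction closes; \cite{jde} decouples it. One first optimises only over control curves ending at the interface to get $b_+(t)=\inf_{\Gamma_+(0,t)}J$ and $b_-(t)=\sup_{\Gamma_-(0,t)}J$ --- quantities depending only on $u_0$ and one flux each --- and then defines explicit boundary data $\lambda_\pm(t)$ from $b_\pm'$ together with a chosen connection $(A,B)$ satisfying $g(A)=f(B)$; the value functions are then $v_\pm(x,t)=\sup\big[J_\pm(\gamma,v_0,\cdot)-\int_{\gamma=0}h(\lambda_\pm)\big]$ over control curves that are allowed a middle segment lying on $x=0$. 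This dwelling segment is absent from your two-case dichotomy, yet it is where the interface cost accrues and where $t_\pm(x,t)$ comes from; the explicit $b_\pm$/$\lambda_\pm$ device is what makes the problem well posed and what yields $f(\lambda_+(t))=g(\lambda_-(t))$, i.e.\ the Rankine--Hugoniot condition at $x=0$. Relatedly, your sketch never mentions the connection $(A,B)$ or the interface entropy condition (\ref{interface}); since (\ref{bv11}) admits infinitely many distinct entropy semigroups indexed by connections, a construction that does not select one cannot identify which ``entropy solution'' it has produced, nor verify admissibility at the interface. Without these two ingredients the representation formulas (\ref{r2.12})--(\ref{r2.13}) in the regions $0\le x<R(t)$ and $L(t)<x<0$ cannot be derived.
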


\par One can ask the similar question as above that under which assumptions on 
$f,g,u_0$ can one expect to have $u \in BV_{loc}(-\infty,L(t))$, $u\in BV_{loc}(L(t),0)$, $ u\in BV_{loc}(0,R(t))$, $u\in BV_{loc}(R(t),\infty)?$ 
In this present article we prove that  $ u\in BV_{loc}(L(t),0)$, $u\in BV_{loc}(0,R(t))$ with nonuniform convex flux and $u_0\in L^\infty.$
that is to say  we relax  $f^{\prime\prime},g^{\prime\prime}\geq \alpha>0$ and avoid $u_0\in BV$
to prove the existence of the BV regularity near the 
interface.  For the case when the connections avoid
the critical points, we prove that the solution stays in BV near the interface, even if the flux is not uniformly convex
(e.g. $f(u)=u^4, g(u)=u^6+1$) and even 
if the initial data is highly oscillatory ($u_0\in L^\infty$). Also we prove that if one allows the connection to be the critical points, 
then similar results hold true without uniform convexity of fluxes and only with $u_0\in L^\infty$, but for large time. 
The result in this paper is very surprising because, even if $f=g,$ but not uniformly
convex then in general there does not exist any region where the solution stays BV for $L^\infty$ initial data, in other words, for $f=g$ and 
nonuniform convex fluxes
one can always choose some $u_0\in L^\infty$ such that $u(\cdot,t)\notin BV(K)$, where $K$ is an interval in $\mathbb{R}$ and $t>0$ (one can see the 
details in \cite{Ssbv}). This happens due to the lack of Lipschitz continuity of $(f^\prime)^{-1}$. Here in this article we prove that
near the interface
i.e, in the region $(L(t),0)\cup (0,R(t))$, either the solution does not oscillates too much or $(f^\prime)^{-1},(g^\prime)^{-1}$ are Lipschitz continuous.
Note that for  $x\in(L(t),0)\cup (0,R(t))$, the characteristic passing through the point $(x,t)$,
bended before time $t$ if and only if 
$f\neq g.$ 
The paper present a special case in which one can prove BV regularity for the solution near the interface 
(starting from $L^\infty$ data) without enforcing  the assumption on uniform convexity of the flux.
However, the price to pay in order to relax the hypothesis on uniform convexity is not negligible.
Our proof relies on the explicit Lax-Oleinik type formulas obtained in \cite{Kyoto, jde} and 
the finer analysis of the characteristics curves \cite{Kyoto, jde, Ssh}.


\par In order to make the present article self contained, we describe enough prerequisites in section 2 before presenting the 
main results in section 3.

\section{Preliminaries}  We recall some definitions and known results from \cite{ARGS, Ssh, Kyoto, jde, 
ghoshaljde}.
\begin{definition} \textbf{Weak solution of} (\ref{bv11}): $u$ is said to be weak solution of (\ref{bv11}) if 
$u\in L^\infty_{\mbox{loc}}(\mathbb{R}\times \mathbb{R}_+)$ and it satisfies the following integral
equality, for all $\phi\in C^\infty_{0}(\overline{{\mathbb{R}\times \mathbb{R}_+}})$
\begin{equation}\label{bv21}
 \int\limits_{0}^{\infty} \int\limits_{-\infty}^{\infty}\left(u\frac{\partial \phi}{\partial t}+(H(x)f(u)+(1-H(x)g(u))
 \frac{\partial \phi}{\partial x}\right)dxdt + \int\limits_{-\infty}^{\infty}u_0(x)\phi(x,0)dx=0.
\end{equation}
It is immediate to check that $u$ satisfies (\ref{bv21}) if and only if  $u$ satisfies the following in weak sense 
\begin{align}\label{bv4}
 \begin{array}{lllll}
   u_t+g(u)_x&=0 \ \ &\mbox{if}\  x<0, t>0,\\
  u_t+f(u)_x&=0 \ \ &\mbox{if} \ x>0, t>0,\\
  u(x,0)&=u_0 \  &\mbox{if}\ x\in\mathbb{R}.
 \end{array}
\end{align}

\end{definition}
\noindent\textbf{Rankine-Hugoniot condition at interface}: Let us denote $u(0+,t)=\lim\limits_{x\rightarrow 0+}u(x,t)$ and
$u(0-,t)=\lim\limits_{x\rightarrow 0-}u(x,t)$. Then at $x=0$, $u$ satisfies the following R-H condition
\begin{equation}\label{bv22}
 f(u(0+,t))=g(u(0-,t)), \ \mbox{a.e.}\ t>0.
\end{equation}

\begin{definition} \textbf{Interior entropy condition}:  A weak solution $u$ of (\ref{bv4})
is said to  satisfy the interior entropy condition if
\begin{equation}\label{bv23}
 \begin{array}{llll}
 \frac{\partial \phi_1(u)}{\partial t} + \frac{\partial \psi_1(u)}{\partial x}\leq 0 \ \mbox{for} \ x>0,t>0,\\
  \frac{\partial \phi_2(u)}{\partial t} + \frac{\partial \psi_2(u)}{\partial x}\leq 0 \ \mbox{for} \ x<0,t>0,
 \end{array}
\end{equation}
in the sense of distributions, where $(\phi_i,\psi_i)$ are the convex entropy pairs such that 
$(\psi_1^\prime(u),\psi_2^\prime(u))=(\phi_1^\prime(u)f^\prime(u), \phi_2^\prime(u)g^\prime(u))$.
\end{definition}

\begin{definition}
 ($\mathbf{(A,B)}$ \textbf{Connection}): Let $(A,B)\in\mathbb{R}^2$,  is called a connection if it satisfies the following \\
 (i) $g(A)=f(B)$.\\
(ii) $ g'(A)\leq0, f'(B)\geq0.$
\end{definition}

\begin{definition}\label{definitionInter}
 \noindent \textbf{$\mathbf{(A,B)}$ Interface entropy condition}: Let  $u\in L^{\infty}_{\textrm{loc}}{(\mathbb{R}\times\mathbb{R}_+)}$ such that $u(0\pm,t)$ 
 exist a.e. $t>0$.
 Define $I_{AB}(t)$ by 
\begin{equation}
 (g(u(0-,t))-g(A))\textrm{sign}(u(0-,t)-A)-(f(u(0+,t))-f(B))\textrm{sign}(u(0+,t)-B).
\end{equation}
Then $u$ is said to satisfy $(A,B)$ Interface entropy condition if for a.e. $t>0$,
\begin{equation}
 I_{AB}(t)\geq 0.\label{interface}
 \end{equation}
 When $A=\theta_g$ or $B=\theta_f$, then (\ref{interface}) reduces to 
  \begin{equation}
 \textrm{meas}\big\{t:f'(u(0+,t))>0,g'(u(0-,t))<0\big\}=0\label{interface2}.
\end{equation}

\end{definition}

\begin{definition}\label{definitionSol}
 \noindent \textbf{$\mathbf{(A,B)}$ Interface entropy solution}: $u\in L^{\infty}_{\textrm{loc}}{(\mathbb{R}\times\mathbb{R}_+)}$ is said to be 
 the  $(A,B)$ Interface entropy solution of  (\ref{bv11}), if it satisfy (\ref{bv21}), (\ref{bv23}) and (\ref{interface}).
\end{definition}

\begin{definition} \textbf{Control curve}: Let $0\leq t$ and $\gamma\in C([0,t],\mathbb{R})$. Then   $\gamma $ is called a control curve if the 
following holds.
\begin{enumerate}
\item [(i).] $\gamma(t)=x$.  
\item[(ii).]  $\gamma$  consists of at most three linear curves and each segment lies completely in either $x
\geq 0$ or $x \leq 0 \,.$ 

\item[(iii).] Let $0 = t_3 \leq t_2 \leq t_1 \leq t_0 = t$ be such that for $i = 1, 2, 3, \quad \gamma_i =
 \gamma |_{[t_i, t_{i - 1}]}$ be the linear parts of $\gamma$.  If $\gamma$ consists of three linear curves then $\gamma_2 = 0$ 
and $\gamma_1,\gamma_3>0$ or $\gamma_1,\gamma_3<0.$
\end{enumerate}

\begin{figure}[http]
\label{fig1}
 \begin{center}
\includegraphics[width=4in, height=3.2in]{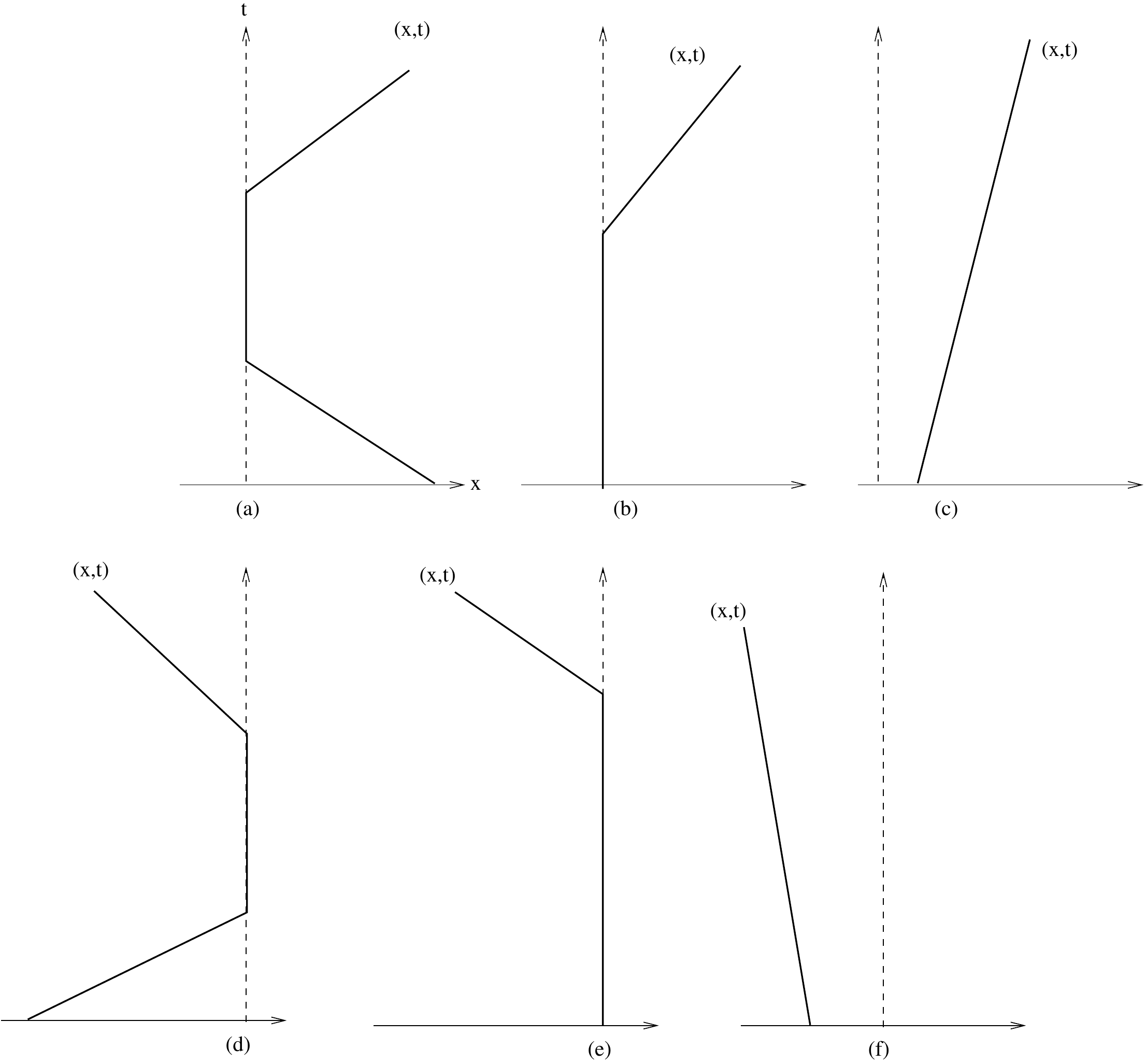}
\end{center}
\caption{Control curves: fig (a), (b), (c) are  positive control curves and fig (d), (e), (f) are negative control curves respectively.}
\end{figure}
Let us denote the set of control curves by $\Gamma(x,t).$

\noindent Positive control curve: Let $x\geq 0$ then define the positive control curve $\Gamma_+(x,t)$ by
\begin{eqnarray*}
 \Gamma_+(x,t)=\{\gamma\in \Gamma(x,t)\ : \ \gamma\geq 0 \}.
\end{eqnarray*}
\noindent Negative control curve: Let $x\leq 0$ then define the negative control curve $\Gamma_-(x,t)$ by
\begin{eqnarray*}
 \Gamma_-(x,t)=\{\gamma\in \Gamma(x,t)\ : \ \gamma\leq 0 \}.
\end{eqnarray*}
Define $\bar{\Gamma}_\pm$ by 
\begin{eqnarray*}
 \bar{\Gamma}_\pm(x,t)=\big\{ \gamma\in \Gamma_\pm(x,t): \{s : \gamma(s)\neq 0\}=\mbox{the interval }(t_1,t),\ \mbox{for some}\ t_1\leq t\big\}.
\end{eqnarray*}

\end{definition}
For simplicity, whenever we write the general flux $h$, it means that it is either $f$ or $g$ respectively in suitable sense.

\begin{definition}\label{lt} \textbf{Convexity, super linear growth, Legendre transformation and some useful facts}:
Let the flux $h$ be a $C^2$ function and strictly convex, i.e.,  $\forall \ 
a,b\in \mathbb{R}$, and $\forall \ r\in [0,1]$, the following holds
\begin{eqnarray*}
 h(ra+(1-r)b)<rh(a)+(1-r)h(b).
\end{eqnarray*}
Also assume that the flux $h$ satisfy
superlinear growth property, i.e.,
\begin{eqnarray*}
 \lim\limits_{|u|\rightarrow \infty} \frac{h(u)}{|u|}=\infty.
\end{eqnarray*}
Then one can define the Legendre transform $h^*$ associated to $h$ by 
\begin{eqnarray*}
 h^*(p)=\sup_{q}\{pq-h(q)\}.
\end{eqnarray*}
It is easy to check that $h^*$ enjoys the following properties
\begin{enumerate}
 \item $h^*$ is $C^1$ and strictly convex.
 \item $h^*$ has the superlinear growth property.
 \item $h^*{^{\prime}}=(h^\prime)^{-1}.$
 \item $h=h^{**}.$
 \item $h^*(h^\prime(p))=ph^\prime(p)-h(p).$
\item $h(h^{*^{\prime}}(p))=ph^{*^{\prime}}(p)-h^*(p).$
\item In addition, if  $h^{\prime\prime}>\alpha>0,$ for some $\alpha$, then $(h^{\prime})^{-1}$ is Lipschitz continuous with Lipschitz 
constant bounded by $\frac{1}{\alpha}.$
 \end{enumerate}
 
\end{definition}

\begin{definition}
\textbf{Cost functional}: Let $v_0$ be the initial data for (\ref{bv12}) and let $h$ be the flux,
then define the cost functionals
 as follows
 \begin{align}
  J(\gamma,v_0,h)&=v_0(\gamma(0)) +\int\limits_0^th^*(\gamma^\prime(\theta))d\theta,\\
    J_+(\gamma,v_0,h)&=v_0(\gamma(0)) +\int\limits_{\{t\ :\ \gamma(t)>0\}}h^*(\gamma^\prime(\theta))d\theta,\\
  J_-(\gamma,v_0,h)&=v_0(\gamma(0)) +\int\limits_{\{t\ :\ \gamma(t)<0\}}h^*(\gamma^\prime(\theta))d\theta.
 \end{align}
\end{definition}
Let $v_0\in C^1(\mathbb{R}\setminus \{0\})\cup \mbox{Lip}(\mathbb{R})$ and $v_0(0)=0$. Define the following auxiliary functions $b_\pm$  by
\begin{align}
 b_+(t):=b_+(t,v_0,f):=\inf\limits_{\{\gamma\in \Gamma_+(0,t)\}} J(\gamma,v_0,f)\\
 b_-(t):= b_-(t,v_0,g):=\sup\limits_{\{\gamma\in \Gamma_-(0,t)\} }J(\gamma,v_0,g).
\end{align}

\begin{definition}
 \textbf{Characteristics and other related functions}: Let us define the set of characteristics  $ch_\pm$ by 
 \begin{align}
 \begin{array}{lllllllllll}
    ch_\pm(t)&=&\{\gamma\in \Gamma_\pm(0,t)\ :\ b_\pm(t)=J(\gamma,u_0,h)\}.
    \end{array}
 \end{align}
 Define $y_\pm,t_\pm$ by 
 \begin{align}
 \begin{array}{lllllllllll}
y_+(t,u_0,h) =\inf\{\gamma(0)\ : \ \gamma\in ch_+(t)\},\\
y_-(t,u_0,h) =\sup\{\gamma(0)\ : \ \gamma\in ch_-(t)\},\\
y_+(x,t)=\mbox{min}\{\gamma(0)\ : \ \gamma\in ch_+(t)\},\\
y_-(x,t)=\mbox{max}\{\gamma(0)\ : \ \gamma\in ch_-(t)\},\\
t_+(x,t)=\mbox{max}\{t_1(\gamma)\ : \ \gamma\in ch_+(t)\},\\
t_-(x,t)=\mbox{min}\{t_1(\gamma)\ : \ \gamma\in ch_-(t)\},\\
R(t)=\mbox{min}\{x \ : \ t_+(x,t)=0\},\\
L(t)=\mbox{max}\{x \ : \ t_-(x,t)=0\},
 \end{array}
 \end{align}
where $t\mapsto y_-(t,u_0,h),\ t\mapsto y_+(t,u_0,h)$ are non-increasing and non-decreasing functions respectively.
For a fix $t>0,$ $x\mapsto y_\pm(x,t), \ t_-(x,t)$ are non-decreasing functions and $x\mapsto t_+(x,t)$ is non-increasing function.
The functions $t\mapsto R(t),\ t\mapsto L(t)$ are Lipschitz continuous functions and there exists some constant $C>0$, such that
$Ct\geq R(t)\geq0, \ -Ct\leq L(t)\leq 0.$ 

 \end{definition}

\begin{definition}
 \textbf{Boundary data}: Let us define the boundary data $\lambda_\pm$ by:
 
 \begin{align}
  \lambda_+(t) &=\left\{\begin{array}{lllll}f_-^{-1}(-b^\prime_+(t)) & \mbox{if}\ -b^\prime_+(t)>\mbox{max}(-b^\prime_-(t),f(B))\\
  f_+^{-1}(\mbox{max}(-b^\prime_-(t),f(B))) & \mbox{if}\ -b^\prime_+(t)\leq \mbox{max}(-b^\prime_-(t),f(B))\end{array}\right.\\
  \lambda_-(t)&=\left\{\begin{array}{lllll}g_+^{-1} (-b^\prime_-(t)) & \mbox{if}\ -b^\prime_-(t)\geq \mbox{max}(-b^\prime_+(t),g(A))\\
                  g_-^{-1} (\mbox{max}(-b^\prime_+(t),g(A))) &\mbox{if}\ -b^\prime_-(t)<\mbox{max}(-b^\prime_+(t),g(A)).
                  \end{array}\right.
 \end{align}
\end{definition}

\begin{definition} 
\noindent \textbf{Value functions}: Let us define the value functions $v_\pm$ by:
 \begin{align}
      v_-(x,t) =\sup_{\gamma\in \Gamma_-(x,t)}\left[J_-(\gamma,v_0,g)-\int_{\gamma=0}g(\lambda_-(\theta))d\theta\right] \ \mbox{if} \ x\leq 0,\\
            v_+(x,t) =\sup_{\gamma\in \Gamma_+(x,t)}\left[J_+(\gamma,v_0,f)-\int_{\gamma=0}f(\lambda_+(\theta))d\theta\right] \ \mbox{if} \ x\geq 0.
 \end{align}

 


\end{definition}

\begin{theorem} (Adimurthi et. al. \cite{jde}) Let the fluxes $f,g$ be strictly convex, smooth and of superlinear growth. Then 
 \begin{enumerate}
\item $f(\lambda_+(t))=g(\lambda_-(t))$.

\item The value functions $v_\pm$ are Lipschitz continuous and $v_+(0,t)=v_-(0,t)$, for all $t>0$. $v_+,v_-$ are the solution of (\ref{bv12}).

\item Define the following Lipschitz continuous function $v$ by 
\begin{align}
 v(x,t)=\left\{\begin{array}{lllll} v_-(x,t) \ \mbox{if}\ x<0, \ t>0,\\
                  v_+(x,t) \ \mbox{if}\ x>0, \ t>0.
                  \end{array}\right.
\end{align}
Then $u=v_x$ is the weak entropy solution of (\ref{bv11}) and satisfies the interface entropy condition (\ref{interface}) 
and the interior entropy condition (\ref{bv23}).  
 \end{enumerate}

\end{theorem}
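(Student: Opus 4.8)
The plan is to prove the three assertions in order, using the dynamic programming structure of the value functions $v_\pm$ together with the fact that the boundary data $\lambda_\pm$ were designed precisely so that the two sides fit together at $x=0$. For part (1) I would first record the elementary identities $f(f^{-1}_\pm(\cdot))=\mathrm{id}$ and $g(g^{-1}_\pm(\cdot))=\mathrm{id}$ on the relevant ranges, the fact that $f^{-1}_-$ (resp.\ $g^{-1}_+$) takes values in the decreasing (resp.\ increasing) branch, and that $g(A)=f(B)=:c$ by the connection hypothesis. As a preliminary one needs $b_\pm$ to be locally Lipschitz, hence differentiable a.e.: this follows from $v_0$ being Lipschitz and from the superlinear growth of $h^*$, which forces the slopes of near-optimal control curves in $\Gamma_\pm(0,t)$ to stay bounded on compact time sets. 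Applying $f$ and $g$ to the piecewise definitions of $\lambda_+$ and $\lambda_-$ then reduces the claim to a finite case check on the ordering of $-b_+'(t)$, $-b_-'(t)$ and $c$: one checks that the branch chosen for $\lambda_+$ and the branch chosen for $\lambda_-$ are always mutually compatible (for instance $-b_+'(t)>\max(-b_-'(t),c)$ excludes the first branch of $\lambda_-$, since it forces $-b_-'(t)<-b_+'(t)$ and $c<-b_+'(t)$), and in every consistent regime one obtains $f(\lambda_+(t))=g(\lambda_-(t))=\max\{-b_+'(t),-b_-'(t),c\}$.

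For part (2): the $v_\pm$ are value functions of variational problems with the strictly convex superlinear Lagrangian $h^*$ and a boundary cost $\int_{\gamma=0}h(\lambda_\pm(\theta))\,d\theta$ which is itself Lipschitz in $t$ (using boundedness of $\lambda_\pm$, again from the Lipschitz bound on $b_\pm$). Local Lipschitz continuity of $v_\pm$ in $(x,t)$ then follows from the dynamic programming principle by the standard comparison of an optimal curve with a straight-line perturbation, the required a priori bound on optimal slopes coming from superlinearity of $h^*$. That $v_\pm$ solve $v_t+h(v_x)=0$ a.e.\ (equivalently in the viscosity sense) on the respective open half-planes follows from the dynamic programming principle in the usual way; the penalty term encodes a boundary condition at $\{x=0\}$, handled by running the dynamic programming principle separately for control curves according to whether or not they touch the interface. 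The matching $v_+(0,t)=v_-(0,t)$ is where part (1) enters: one shows $t\mapsto v_\pm(0,t)$ is Lipschitz with $\tfrac{d}{dt}v_+(0,t)=-f(\lambda_+(t))$ and $\tfrac{d}{dt}v_-(0,t)=-g(\lambda_-(t))$ a.e.\ — this is exactly the statement that the one-sided trace $\partial_x v_\pm(0\pm,t)$ equals $\lambda_\pm(t)$, extracted from the optimality conditions for the control curves that reach $\{x=0\}$ — so that $v_+(0,0)=v_-(0,0)=v_0(0)=0$ together with part (1) give $v_+(0,t)=v_-(0,t)$ for all $t>0$; gluing the two pieces yields the Lipschitz function $v$ solving (\ref{bv12}).

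For part (3): differentiating the Hamilton--Jacobi equations in $x$, on each open half-plane $u=v_x$ solves $u_t+f(u)_x=0$ for $x>0$ and $u_t+g(u)_x=0$ for $x<0$ in the distributional sense, so $u$ is a weak solution of (\ref{bv4}), hence of (\ref{bv11}). The Rankine--Hugoniot relation (\ref{bv22}) follows from continuity of $v$ across $x=0$: since $v_+(0,t)=v_-(0,t)$ for all $t$, $\partial_t v$ has a common trace from the two sides, and the two Hamilton--Jacobi equations give $-f(u(0+,t))=\partial_t v(0+,t)=\partial_t v(0-,t)=-g(u(0-,t))$. The interior entropy inequalities (\ref{bv23}) follow from the local semiconcavity in $x$ of value functions of convex superlinear variational problems, which yields the one-sided (Oleinik) bound $u(x+,t)\le u(x-,t)$; for strictly convex $f,g$ this one-sided bound is equivalent to the interior entropy condition on the respective half-plane. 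For the interface condition (\ref{interface}) one uses $u(0\pm,t)=\lambda_\pm(t)$ with the branch information from their definitions — in one regime $\lambda_+(t)=f^{-1}_-(-b_+'(t))$ lies on the decreasing branch, so $f'(\lambda_+(t))\le 0$; in the other $\lambda_+(t)=f^{-1}_+(\max(-b_-'(t),c))$ lies on the increasing branch with $f(\lambda_+(t))\ge f(B)$, hence $\lambda_+(t)\ge B$; symmetrically for $\lambda_-$ — and a short case check against the regime compatibility from part (1) shows that the two contributions to $I_{AB}(t)$ combine to $I_{AB}(t)\ge0$, together with the reduction (\ref{interface2}) when $A=\theta_g$ or $B=\theta_f$.

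I expect the main obstacle to be the interface analysis inside part (2): proving rigorously that the value functions built with the $\lambda_\pm$ penalties have interface trace slopes exactly $\lambda_\pm$, and that $\tfrac{d}{dt}v_\pm(0,t)=-h(\lambda_\pm(t))$ a.e. This requires a careful description of the optimal control curves near $\{x=0\}$ — whether an optimizer touches the interface on a whole time interval, at an isolated instant, or not at all — i.e.\ exactly the fine structure of characteristics developed in \cite{Kyoto, jde, Ssh}. Once that is available, the Lipschitz estimates, the a.e.\ solvability of the Hamilton--Jacobi equations, the semiconcavity, and the Rankine--Hugoniot identity are routine, and part (1) is pure bookkeeping modulo the Lipschitz regularity of $b_\pm$.
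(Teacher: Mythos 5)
This theorem is stated in the paper as a recalled result of Adimurthi, Mishra and Veerappa Gowda \cite{jde}; the paper itself supplies no proof, so there is nothing internal to compare your argument against --- it can only be measured against the strategy of \cite{jde}, which it does in fact mirror. Your part (1) is essentially a complete and correct proof: applying $f$ and $g$ to the two branches of $\lambda_\pm$ and using $g(A)=f(B)=c$ gives $f(\lambda_+(t))=\max(-b_+'(t),-b_-'(t),c)=g(\lambda_-(t))$ in every regime, and the only points you need to make explicit are that $-b_+'(t)\geq f(\theta_f)$ and $\max(-b_-'(t),f(B))\geq f(\theta_f)$ (so that $f_-^{-1}$, $f_+^{-1}$ are applied inside their ranges), which follow from the displayed formulas for $b_\pm'$ and from $f(B)\geq f(\theta_f)$.

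For parts (2) and (3) your outline is the right one, but as written it is a plan rather than a proof, and the gap is exactly where you locate it: everything hinges on showing that the penalized value functions satisfy a dynamic programming principle and that $\tfrac{d}{dt}v_\pm(0,t)=-h(\lambda_\pm(t))$ a.e. This is genuinely delicate here because the admissible class $\Gamma_\pm(x,t)$ consists of curves with at most three linear pieces, one of which may sit on the interface; this class is not closed under the usual restriction and concatenation operations, so the ``standard'' dynamic programming argument you invoke does not apply off the shelf --- one must first prove that minimizers exist in this restricted class, that they do not cross $x=0$ except along a single interval, and that the resulting representation formulas (\ref{r2.12})--(\ref{r2.13}) hold. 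That analysis occupies the bulk of \cite{Kyoto, jde} and is also what justifies your trace identities $u(0\pm,t)=\lambda_\pm(t)$ used for the Rankine--Hugoniot and interface entropy conditions in part (3). Two smaller points to tighten: the interior entropy inequality on $(0,R(t))$ does not come from semiconcavity propagated from the initial data (those characteristics emanate from the interface, not from $t=0$), but from monotonicity of $t_+(\cdot,t)$ in the representation $u=(f')^{-1}(x/(t-t_+(x,t)))$; and the step ``$v_+(0,\cdot)=v_-(0,\cdot)$ implies equal time-traces $\partial_t v(0\pm,t)$'' needs the a.e.\ identification of the derivative of the boundary trace with the trace of the time derivative, which is where the Lipschitz regularity and the explicit characterization of optimizers at $x=0$ are again used.
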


\noindent\textbf{Some useful formulas}: The case when $A=\theta_g$ or $B=\theta_f$. At the points of differentiability of $t_\pm,y_\pm$, we have 
\begin{eqnarray}
 u(x,t)=
\left\{\begin{array}{lll} (f^{\prime})^{-1}\left(\frac{x-y_+(x,t)}{t}\right)=u_0(y_+(x,t)) &\mbox{ if }& 0\leq R(t)<x<\infty,\\
(f^{\prime})^{-1}\left(\frac{x}{t-t_+(x,t)}\right) &\mbox{ if }& 0\leq x<R(t)\\
=u(0+,t_+(x,t))\\
=f_+^{-1}g(u(0-,t_+(x,t)))\\
=f_+^{-1}g(g^{\prime})^{-1}\left(\frac{-y_-(0-,t_+(x,t))}{t_+(x,t)}\right)\\
=f_+^{-1}g(u_0(y_-(0-,t_+(x,t)))).
\end{array}\right.\label{bv624}
\end{eqnarray}

\begin{eqnarray}
 u(x,t)=
\left\{\begin{array}{lll} (g^{\prime})^{-1}\left(\frac{x-y_-(x,t)}{t}\right)=u_0(y_-(x,t)) &\mbox{ if }& -\infty<x\leq L(t)\leq 0, \\
(g^{\prime})^{-1}\left(\frac{x}{t-t_-(x,t)}\right) &\mbox{ if}& L(t)<x<0,\\
=u(0-,t_-(x,t))\\
=g_-^{-1}f(u(0+,t_-(x,t)))\\
=g_-^{-1}f(f^{\prime})^{-1}\left(\frac{-y_+(0+,t_-(x,t))}{t_-(x,t)}\right)\\
=g_-^{-1}f(u_0(y_+(0+,t_-(x,t)))).
\end{array}\right.\label{bv625}
\end{eqnarray}

The case when $A\neq\theta_g$ and  $B\neq\theta_f$. At the points of differentiability of $t_\pm,y_\pm$, we have 
\begin{eqnarray}
 u(x,t)=
\left\{\begin{array}{lll} (f^{\prime})^{-1}\left(\frac{x-y_+(x,t)}{t}\right)=u_0(y_+(x,t) &\mbox{ if }& 0\leq R(t)<x<\infty,\\
(f^{\prime})^{-1}\left(\frac{x}{t-t_+(x,t)}\right) &\mbox{ if }& 0\leq x<R(t)\\
=u(0-,t_+(x,t))\\
=\lambda_+(t_+(x,t))\\
=f_+^{-1}(\mbox{max}(-b^\prime_-(t_+(x,t)),f(B))).\\
\end{array}\right.\label{bv24}
\end{eqnarray}

\begin{eqnarray}
 u(x,t)=
\left\{\begin{array}{lll} (g^{\prime})^{-1}\left(\frac{x-y_-(x,t)}{t}\right)=u_0(y_-(x,t)) &\mbox{ if }& -\infty<x\leq L(t)\leq 0, \\
(g^{\prime})^{-1}\left(\frac{x}{t-t_-(x,t)}\right) &\mbox{ if}& L(t)<x<0,\\
=u(0-,t_-(x,t))\\
=\lambda_-(t_-(x,t))\\
=g_-^{-1}(\mbox{max}(-b^\prime_+(t_-(x,t)),g(A))),
\end{array}\right.\label{bv25}
\end{eqnarray}
where $b^\prime_\pm$ satisfies the following 
\begin{align}
 b^\prime_-(t)=\left\{\begin{array}{lll}
\displaystyle-g\left((g^\prime)^{-1}\left(-\frac{y_-(t,u_0,g)}{t}\right)\right)  &\mbox{if}&y_-(t,u_0,g)<0,\\
-g(\theta_g) &\mbox{if}& y_-(t,u_0,g)=0,
                  \end{array}\right.
\end{align}

\begin{align}
 b^\prime_+(t)=\left\{\begin{array}{lll}
\displaystyle-f\left((f^\prime)^{-1}\left(-\frac{y_+(t,u_0,g)}{t}\right)\right)  &\mbox{if}&y_+(t,u_0,f)>0,\\
-f(\theta_f) &\mbox{if}& y_+(t,u_0,f)=0.
                  \end{array}\right.
\end{align}

\begin{theorem}
 (Adimurthi et.al \cite{ARGS}). Let $u_0\in L^{\infty}(\mathbb{R})$ and $u$ be the solution of (\ref{bv11}).
 Let $\ t>0, \ \epsilon>0, M>\epsilon, 
\ I(M,\epsilon)=\{x:\epsilon\leq|x|\leq M\}$. Then

\begin{enumerate}
 \item[(1).]  Suppose there exists an $\alpha>0$ such that $f''\geq\alpha, \ g''\geq\alpha$, then there exist $C=C(\epsilon,M,\alpha)$ such that 
\begin{displaymath}
 \begin{array}{lll}
 \textrm{TV}\big(u(\cdot,t),I(M,\epsilon)\big)\leq C(\epsilon,M,t).
\end{array}
\end{displaymath}
\item[(2).] Suppose $u_0\in\textrm{BV}$, and $T>0$. Then there exists $C(\epsilon,T)$ such that for all $0<t\leq T$
\begin{displaymath}
 \textrm{TV}\big(u(\cdot,t),|x|>\epsilon\big)\leq C(\epsilon,t)\textrm{TV}(u_0)+4||u_0||_{\infty}.
\end{displaymath}
\item[(3).] Let $u_0\in\textrm{BV}$, $T>0$ and $A\neq\theta_g$ and $B\neq\theta_f$. 
Then there exists $C>0$ such that for all $0<t\leq T$,
\begin{displaymath}
 \textrm{TV}\big(u(\cdot,t)\big)\leq C \ \textrm{TV}(u_0)+6||u_0||_{\infty}.
\end{displaymath}
\item[(4).] Let $ u_0, \ f_+^{-1}(g(u_0)), \  g_-^{-1}(f(u_0))\in\textrm{BV}$, $T>0$ and $A=\theta_g$. Then for all
 $0<t\leq T$,
\begin{displaymath}
 \begin{array}{lll}
 \textrm{TV}\big(u(\cdot,t)\big)
\leq \textrm{TV}(u_0)+\displaystyle \max\big(TV(f_+^{-1}(g(u_0))),\textrm{TV}(g_-^{-1}(f(u_0)))\big)+6||u_0||_{\infty}.\\\\
\end{array}
\end{displaymath}
\item[(5).] For a certain choice of fluxes $f$ and $g$ there exists $u_0\in \textrm{BV}\cap L^{\infty}$
 such that TV$(u(\cdot,1))=\infty$ if $A=\theta_g$ 
or $B=\theta_f$.
\end{enumerate}
\end{theorem}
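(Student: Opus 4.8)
The five items are independent quantitative bounds, but all follow from the explicit Lax--Oleinik representations (\ref{bv624})--(\ref{bv25}) of $u(\cdot,t)$ together with three soft inputs. First, the composition principle: if $\phi$ is Lipschitz on an interval containing the range of a bounded-variation function $\psi$, then $\textrm{TV}(\phi\circ\psi)\le\mathrm{Lip}(\phi)\,\textrm{TV}(\psi)$; by item (7) of Definition \ref{lt} this applies to $\phi=(h')^{-1}$ with $\mathrm{Lip}\le1/\alpha$ whenever $h''\ge\alpha$. Second, the monotone-substitution principle: $\textrm{TV}(\psi\circ m)\le\textrm{TV}(\psi)$ for monotone $m$, which is used each time we precompose with one of the monotone maps $x\mapsto y_\pm(x,t)$ or $x\mapsto t_\pm(x,t)$. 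Third, finite propagation speed: characteristic speeds are bounded by a constant $C=C(\|u_0\|_\infty,f,g)$, so $y_\pm(x,t)\in[x-Ct,x+Ct]$, $0\le t_\pm(x,t)\le t$, $0\le R(t)\le Ct$ and $-Ct\le L(t)\le0$; in particular $u(\cdot,t)$ has at most three jumps inside any $\{|x|>\epsilon\}$ window, located at $x=L(t)$, $x=0$, $x=R(t)$, each of size $\le2\|u_0\|_\infty$.

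\noindent\textbf{Items (1), (3), (4).} On the outer pieces one has $u(x,t)=u_0(y_\pm(x,t))$, so the monotone-substitution principle gives $\textrm{TV}(u(\cdot,t),\{x>R(t)\})\le\textrm{TV}(u_0)$ and likewise $\textrm{TV}(u(\cdot,t),\{x<L(t)\})\le\textrm{TV}(u_0)$, which handles items (3) and (4) away from the interface; when instead only $u_0\in L^\infty$ but $f'',g''\ge\alpha$, write $u(x,t)=(f')^{-1}\!\big(\tfrac{x-y_+(x,t)}{t}\big)$ and apply the composition principle — the inner argument has variation $\le\tfrac{M-\epsilon}{t}+\tfrac1t\,\mathrm{osc}_{[\epsilon,M]}\,y_+\le\tfrac{M-\epsilon}{t}+C$ — to obtain the outer half of item (1). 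On the inner pieces I route through whichever line of (\ref{bv624})--(\ref{bv25}) fits the connection. For item (1) use the connection-free form $u(x,t)=(f')^{-1}\!\big(\tfrac{x}{t-t_+(x,t)}\big)$: for $x\ge\epsilon$ the ratio of the nondecreasing functions $x$ and $t-t_+(x,t)$ stays in $[\epsilon/t,\,C]$, so it has finite variation controlled by $\epsilon,M,t$, and the composition principle with $\mathrm{Lip}\le1/\alpha$ finishes. For item (3), since $A\ne\theta_g$ forces $A<\theta_g$ and $B\ne\theta_f$ forces $B>\theta_f$, the representation $u(x,t)=f_+^{-1}\big(\max(-b'_-(t_+(x,t)),f(B))\big)$ has its argument in $[f(B),\infty)$, a set on which $f_+^{-1}$ is Lipschitz with constant $1/f'(B)$; combining this with the monotonicity of $t_+(\cdot,t)$ and the estimate $\textrm{TV}(b'_-)\le C\,\textrm{TV}(u_0)+C\|u_0\|_\infty$ read off from the displayed formula for $b'_-$ (and the monotonicity of $y_-(\cdot,u_0,g)$) yields item (3). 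For item (4), $A=\theta_g$ makes $u(x,t)=f_+^{-1}\big(g(u_0(y_-(0-,t_+(x,t))))\big)$ a composition of $f_+^{-1}\circ g\circ u_0$ with monotone maps, so the monotone-substitution principle gives $\textrm{TV}(u(\cdot,t),\{0<x<R(t)\})\le\textrm{TV}(f_+^{-1}(g(u_0)))$ — exactly the hypothesised quantity — and symmetrically $\textrm{TV}(g_-^{-1}(f(u_0)))$ on the left. Adding the at most three interface jumps (for the $6\|u_0\|_\infty$ in (3), (4)) completes these three items.

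\noindent\textbf{Item (2) and the main obstacle.} Item (2) is the borderline case: $u_0\in\textrm{BV}$ but neither uniform convexity nor control of $f_+^{-1}(g(u_0))$, so the composition principle cannot be invoked directly at the interface. The plan is a localization — for $|x|>\epsilon$, finite propagation speed confines the relevant characteristic foot and bending time to compact sets whose sizes depend only on $\epsilon,t,f,g$, on which the one-sided inverses $f_+^{-1},g_-^{-1}$ are genuinely Lipschitz; the resulting constant is the $C(\epsilon,t)$ of the statement, and the two principles then bound the interface contribution by $C(\epsilon,t)\,\textrm{TV}(u_0)$, with the jumps adding $4\|u_0\|_\infty$. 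The genuinely hard part of the theorem is item (5): one must exhibit fluxes $f,g$ and $u_0\in\textrm{BV}\cap L^\infty$, say with $A=\theta_g$, such that $\textrm{TV}(u(\cdot,1))=\infty$. The obstruction is structural — $f_+^{-1}$ has a vertical tangent at its branch point $f(\theta_f)$ (true even for uniformly convex $f$, since $f'(\theta_f)=0$, with $f_+^{-1}(f(\theta_f)+s)-\theta_f\sim c\sqrt{s}$) — so I would build $u_0$ on $\{x<0\}$ from alternating rarefaction- and shock-generating pieces separated by constant plateaus, tuned so that the values $u_0$ takes along the monotone sequence of characteristic feet that reach the interface exactly at $t=1$ oscillate between $\theta_f$ and $\theta_f+a_n$ with $a_n\downarrow0$, $\sum a_n<\infty$ (hence $\textrm{TV}(u_0)<\infty$) but $\sum\sqrt{a_n}=\infty$; the jumps of $u(\cdot,1)$ near the interface then sum to $+\infty$. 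Verifying that this data produces exactly such a landing pattern at $t=1$ — i.e. solving the broken-characteristic geometry backwards from the interface, keeping track of which data values are selected by the $(A,B)$ entropy condition — is the delicate bookkeeping where I expect the real work to lie; swapping the roles of $f$ and $g$ gives the $B=\theta_f$ case.
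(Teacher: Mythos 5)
the paper does not prove this theorem at all --- it is quoted verbatim from \cite{ARGS} in the preliminaries, so there is no internal proof to compare you against. Judged on its own terms, your outline for items (1), (3) and (4) is sound and uses exactly the machinery the present paper deploys for its own Theorems \ref{m2} and \ref{m1}: the explicit representations (\ref{bv624})--(\ref{bv25}), the fact that precomposition with the monotone maps $y_\pm(\cdot,t)$, $t_\pm(\cdot,t)$ does not increase variation, and Lipschitz bounds for $(h')^{-1}$, $f_+^{-1}$, $g_-^{-1}$ away from the critical values (your observation that $A\neq\theta_g$, $B\neq\theta_f$ pushes the argument of $f_+^{-1}$ into $[f(B),\infty)$ where it is Lipschitz, and that in item (4) the inner trace is literally $f_+^{-1}(g(u_0))$ composed with monotone maps, is the right mechanism). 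One inaccuracy: the interface jump is between $u(0-,t)$ and $u(0+,t)=f_+^{-1}(g(u(0-,t)))$, so it is controlled by $2\|u\|_\infty$, not automatically by $2\|u_0\|_\infty$; recovering the stated constants $4\|u_0\|_\infty$ and $6\|u_0\|_\infty$ needs the a priori bound on the traces, which you should record.

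The genuine gaps are in items (2) and (5). For (2), your ``localization'' asserts rather than derives that the one-sided inverses are Lipschitz on the range actually attained in the strip $\epsilon<x<R(t)$: there the value is $f_+^{-1}$ of a quantity that can approach the branch point $f(\theta_f)$, where $f_+^{-1}$ has a vertical tangent, and this is precisely the failure mode exploited in item (5). What rescues the estimate is that $|x|>\epsilon$ keeps the slope $x/(t-t_+(x,t))$ in $[\epsilon/t,\,C]$, hence keeps $u(x,t)$ in a compact set bounded away from $\theta_f$ on one side, and \emph{that} is where the constant $C(\epsilon,t)$ (blowing up as $\epsilon\to0$) comes from; this step must be made explicit, since it is the whole content of (2). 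For (5), the construction \emph{is} the proof: you correctly identify the square-root singularity of $f_+^{-1}$ at $f(\theta_f)$ and the $\sum a_n<\infty$, $\sum\sqrt{a_n}=\infty$ mechanism, but you defer the verification that an admissible $(A,B)$-entropy solution actually realizes the prescribed oscillating landing pattern at $t=1$ --- i.e.\ that the alternating rarefaction/shock data on $x<0$ sends a monotone sequence of characteristic feet to the interface in the right order, compatibly with (\ref{interface2}) and the Rankine--Hugoniot condition. Without that bookkeeping, item (5) is not established, so the proposal is a correct and well-aligned outline of (1), (3), (4) but leaves (2) under-justified and (5) unproven.
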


\begin{theorem}\label{th1}({Ghoshal \cite{ghoshaljde}}).
Let $u_0\in L^{\infty}(\mathbb{R})$ and $u$ be a solution of (\ref{bv11}). Let
$t>0,\ \epsilon>0,\ M>\epsilon $ and 
\begin{eqnarray*}I(M)&=&\{x\ :\ |x|<M\},\\
I(R_1(t))&=&\{x>0\ :\ x<R_1(t)\},\ I(L_1(t))=\{x<0\ :\ x>L_1(t)\}.\end{eqnarray*}
\begin{enumerate}
 \item [(i).] Let $f(\theta_f)\neq g(\theta_g)$ and $f^{\prime\prime}\geq \alpha,\ g^{\prime\prime}\geq \alpha,$
for some $\alpha>0$, also assuming the fact that $\mbox{Supp}\ u_0\subset[-K,K],$ for
some $K>0,$ then there exists a $T_0>0$ such that \mbox{for all} $t>T_0,$
\begin{eqnarray}
 TV\left(u(\cdotp,t),I(M)\right)\leq C(M,t).\label{r3.1}
\end{eqnarray}
As a consequence we have, \mbox{for all} $t>T_0,$
\begin{eqnarray}
TV(u(\cdotp,t), \mathbb{R})\leq C(t),\label{r3.2}
\end{eqnarray}
where $C(t),C(M,t)>0$ are some constants.
 
 \item [(ii).] Let $f(\theta_f)= g(\theta_g)$ then for all $t>0,$
\begin{eqnarray}
 TV\left(u(\cdotp,t),I(R_1(t))\cup I(L_1(t))\right)\leq C(t).\label{r3.3}
\end{eqnarray}
In addition if  $f^{\prime\prime}\geq \alpha,\ g^{\prime\prime}\geq \alpha,$ for some $\alpha>0,$
 then for all $t>0,$
\begin{eqnarray}
 TV\left(u(\cdotp,t),I(M)\right)\leq C(M,t).\label{r3.4}
\end{eqnarray}
As a consequence, if  $\mbox{Supp}\ u_0\subset[-K,K],$ for
some $K>0,$ then for all $t>0,$
\begin{eqnarray}
TV(u(\cdotp,t), \mathbb{R})\leq C(t).\label{r3.5}
\end{eqnarray}
\item[(iii).] Let $f(\theta_f)=g(\theta_g)$ 
and  $u_0\in BV(\mathbb{R})$ then \mbox{for all} $t>0,$
\begin{eqnarray}
 TV\left(u(\cdotp,t)\right)\leq C(t)(TV(u_0)+1)+4\|u_0\|\infty.\label{r3.6}
\end{eqnarray}

\item[(iv).]
 Let $A\neq \theta_g$ and $B\neq \theta_f$. If $f^{\prime\prime},g^{\prime\prime}\geq\alpha>0,u_0\in L^\infty(\mathbb{R})$  then 
for all $t>0$,
\begin{eqnarray}
TV\left(u(\cdotp,t),I(\epsilon)\cup I(M,\epsilon)\right)\leq C_1(\epsilon)+C_2(\epsilon,M,t).
\end{eqnarray}
As a consequence, if   $u_0\in BV(\mathbb{R})$ then for all $t>0,$
\begin{eqnarray}
 TV\left(u(\cdotp,t)\right)\leq C(\epsilon,t)(TV(u_0)+1)+4\|u_0\|\infty.
\end{eqnarray}
\end{enumerate}
\textbf{Counter example} (Ghoshal \cite{ghoshaljde}):  Let $f(u)=(u-1)^2-1, \ g(u)=u^2$. Then there exists
an initial data $u_0\in BV$ and a sequence $T_n,$ such that 
$\lim\limits_{n\rightarrow\infty} T_n=\infty$ and 
\begin{eqnarray}
 TV\left(u(\cdotp,T_n)\right)=\infty, \ \mbox{for all}\ n.
\end{eqnarray}
\end{theorem}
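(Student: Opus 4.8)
The plan is to fix $t>0$ and decompose $\mathbb{R}$ into the \emph{bending strip} $\mathcal{B}(t):=(L(t),0)\cup(0,R(t))$, on which every backward characteristic meets the interface $x=0$, and its complement, on which backward characteristics are straight segments reaching the initial line. On the complement the formula $u(x,t)=(f')^{-1}((x-y_+(x,t))/t)=u_0(y_+(x,t))$ holds for $x>R(t)$ (and symmetrically for $x<L(t)$), with $x\mapsto y_+(x,t)$ non-decreasing; when $f''\geq\alpha$, $(f')^{-1}$ is Lipschitz with constant $1/\alpha$, so $\mathrm{TV}(u(\cdot,t),\{R(t)<x<M\})\leq(1/\alpha t)\,\mathrm{TV}(x-y_+(x,t))<\infty$, which is exactly the mechanism of the estimate recalled from \cite{ARGS} and supplies the ``$I(M)$ away from the interface'' content of (ii) and (iv). If moreover $\mathrm{Supp}\,u_0\subset[-K,K]$, then $u\equiv0$ outside a cone $|x|\leq K+Ct$, so the complement carries only finitely many bounded jumps, which upgrades a local estimate to a global one and provides the $\mathbb{R}$-statements in (i) and (ii).

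The core of the argument is on $\mathcal{B}(t)$. There the explicit formulas give, for $0\leq x<R(t)$, $u(x,t)=(f')^{-1}(x/(t-t_+(x,t)))=\lambda_+(t_+(x,t))$ and, for $L(t)<x<0$, $u(x,t)=\lambda_-(t_-(x,t))$, where $x\mapsto t_+(x,t)$ is non-increasing and $x\mapsto t_-(x,t)$ is non-decreasing. Since composition with a monotone function does not increase the variation, $\mathrm{TV}(u(\cdot,t),\mathcal{B}(t))\leq\mathrm{TV}(\lambda_+,(0,t))+\mathrm{TV}(\lambda_-,(0,t))$, and the whole problem reduces to bounding the variation of the boundary traces $\lambda_\pm$, equivalently of $-b'_\pm$, on bounded time intervals.

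I would then treat $-b'_\pm$ through $-b'_+(t)=f((f')^{-1}(-y_+(t,u_0,f)/t))$ and its $g$-analogue, using that $t\mapsto y_+(t,u_0,f)$ is non-decreasing, $t\mapsto y_-(t,u_0,g)$ non-increasing, the $y_\pm$ are bounded on bounded time intervals with $y_\pm(t)=O(t)$ near $0$, and $s\mapsto h((h')^{-1}(s))$ is monotone on each half-line. Under uniform convexity, $(h')^{-1}$ — hence $f_\pm^{-1},g_\pm^{-1}$ — is Lipschitz with constant $1/\alpha$, and $-y_\pm(\cdot)/\cdot$ has bounded variation on bounded time intervals; hence $-b'_\pm$, and then $\lambda_\pm$, are of bounded variation, which gives the near-interface estimate in (iv) and, combined with the first paragraph, proves (iv) in full. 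When $f(\theta_f)=g(\theta_g)$ and the connection is critical, $f(B)=g(A)=f(\theta_f)=g(\theta_g)$, and since $-b'_\pm\geq f(\theta_f)=g(\theta_g)$ always, the $\max$-truncations in the definition of $\lambda_\pm$ are inactive; moreover the two inverse branches $f_-^{-1},f_+^{-1}$ (resp. $g_-^{-1},g_+^{-1}$) agree at $\theta_f$ (resp. $\theta_g$), so $\lambda_\pm$ can switch branches only through the value $\theta_f$ (resp. $\theta_g$), and a direct case analysis shows $\lambda_\pm$ is monotone on the time-set where the characteristic bends; this yields \eqref{r3.3} for every $t$ with no convexity hypothesis, and with the first-paragraph estimate also \eqref{r3.4}--\eqref{r3.5}. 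If in addition $u_0\in BV$, the variation of $-y_\pm(\cdot)/\cdot$, hence of $\lambda_\pm$, is dominated linearly by $\mathrm{TV}(u_0)$ up to the fixed $O(\|u_0\|_\infty)$ jumps at the cone boundary and the interface, giving \eqref{r3.6} and the last display of (iv). Finally, for (i) with $f(\theta_f)\neq g(\theta_g)$ and $\mathrm{Supp}\,u_0\subset[-K,K]$: for $t>T_0$, with $T_0$ determined by $K$ and an upper bound on characteristic speeds, every characteristic reaching the interface originates in $\{u_0=0\}$, so $y_\pm(s,u_0,h)\equiv0$ and $\lambda_\pm(s)$ is constant for $s$ in an interval up to $t$; hence $\mathrm{TV}(u(\cdot,t),\mathcal{B}(t))=0$, and together with the uniform-convexity estimate on the complement and $u\equiv0$ in the far field this yields \eqref{r3.1}--\eqref{r3.2}.

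The main obstacle is exactly the bounded-variation (or monotonicity) of $\lambda_\pm$ in the non-uniformly-convex case: $\lambda_\pm$ is a concatenation of two branches of possibly non-Lipschitz inverse functions applied to the fluxes $-b'_\mp$, and one must exclude an accumulation of branch switches occurring away from the value $\theta_f$ (resp. $\theta_g$) — this is where equality of the minimal heights $f(\theta_f)=g(\theta_g)$ is genuinely needed, since it is what forces the two branches to match. When $f(\theta_f)\neq g(\theta_g)$ this can fail, and the counter example makes the failure quantitative: for $f(u)=(u-1)^2-1$ and $g(u)=u^2$ (so $\theta_f=1$, $\theta_g=0$, $f(\theta_f)=-1\neq0=g(\theta_g)$, with $A=\theta_g$ or $B=\theta_f$), one constructs $u_0\in BV$ from a summable sequence of rarefaction/shock blocks located at points $x_n\to-\infty$, scaled so that the $n$-th block reaches the interface exactly at a time $T_n\to\infty$; propagating this through $\lambda_\pm$ and the explicit formulas \eqref{bv624}--\eqref{bv625} produces a sawtooth profile of $u(\cdot,T_n)$ on $\mathcal{B}(T_n)$ whose total variation diverges, i.e.\ $\mathrm{TV}(u(\cdot,T_n))=\infty$ for all $n$. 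All the estimates above are routine, if lengthy, computations with the explicit Lax--Oleinik formulas.
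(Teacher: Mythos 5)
A preliminary remark: Theorem \ref{th1} is recalled from \cite{ghoshaljde} as background and the present paper gives no proof of it, so your attempt can only be checked against the mechanism the paper does display in its own proofs of Theorems \ref{m2} and \ref{m1}, which handle the analogous statements with the same explicit formulas. Your skeleton is the right one (split off the bending strip $(L(t),0)\cup(0,R(t))$, control the complement by uniform convexity and the monotonicity of $y_\pm(\cdot,t)$, control the strip through the interface data), but the central step on the strip has a genuine gap.

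You reduce the variation on the strip to $TV(\lambda_\pm,(0,t))$ and then assert that $\lambda_\pm$ is of bounded variation (uniformly convex case) or monotone (equal-minima case). Neither claim is justified, and the second is false. Since $-b^\prime_-(s)=g\bigl((g^\prime)^{-1}(-y_-(s,u_0,g)/s)\bigr)$ and only $y_-$ --- not the quotient $-y_-(s)/s$ --- is monotone, $\lambda_\pm$ oscillates with the trace and is not monotone even when $f(\theta_f)=g(\theta_g)$; moreover the natural bound for $TV(-y_-(s)/s,(0,t))$ contains $\int_0^t |y_-(s)|s^{-2}\,ds$ with $|y_-(s)|\le Cs$, which diverges at $s=0$, so $TV(\lambda_\pm,(0,t))$ is not a finite upper bound and the reduction is vacuous precisely for $L^\infty$ data, which is the whole content of the theorem. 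What actually closes the argument --- and what the paper's proofs of Theorems \ref{m2} and \ref{m1} do --- is a split of $(0,R(t))$ at an intermediate point $\epsilon(t)$: on $(0,\epsilon(t))$ one has $t_+(x,t)\ge t_+(\epsilon(t),t)>0$ and $x\mapsto -y_-(t_+(x,t),u_0,g)/t_+(x,t)$ is a product of a non-increasing and a non-decreasing function of $x$, whose variation is bounded explicitly as in \eqref{bv438}; on $(\epsilon(t),R(t))$, where $t_+(x,t)$ may approach $0$ and the trace representation degenerates, one switches to $u=(f^\prime)^{-1}\bigl(x/(t-t_+(x,t))\bigr)$ with $x\ge\epsilon(t)$ bounded away from $0$, as in \eqref{bv442}--\eqref{bv444}. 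Your proposal collapses these two regimes into one.

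Two further points. In part (i) you claim that for $t>T_0$ every bent characteristic originates in $\{u_0=0\}$, so $TV(u(\cdot,t),\mathcal{B}(t))=0$; but points $x$ near $R(t)$ correspond to small $t_+(x,t)$, i.e.\ to the trace at early times, which still sees the data, so the strip carries a nonzero, $t$-dependent variation and the two-region argument is again required. And for \eqref{r3.3} the branch-matching observation at $f(\theta_f)=g(\theta_g)$ gives at best continuity of $\lambda_\pm$ across branch switches, not monotonicity or bounded variation, so part (ii) is not established by your argument either.
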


\bigskip


\section{Main results}

In the following Theorems we have relaxed the assumptions of uniform convexity of the fluxes made in \cite{ARGS, ghoshaljde} and 
allowed  $u_0\in L^\infty$.
Let us denote $I(R(t))=\{x>0\ :\ x<R(t)\},\ I(L(t))=\{x<0\ :\ x>L(t)\}.$\\

\noindent \textbf{Hypothesis on the fluxes $\boldsymbol f,\boldsymbol g$}: Let the fluxes satisfy the following

 \noindent $\boldsymbol H\boldsymbol 1$. $f,g$ be $C^2$, strictly convex and of superlinear growth (see definition \ref{lt}).\\
\noindent $\boldsymbol H\boldsymbol 2$. Either $f^{\prime\prime}>\alpha>0$, for some $\alpha$ or  the zero
of $f^\prime$ and $f^{\prime\prime}$ are the same, i.e.,
 if there exists $p\in\mathbb{R}$ such that $f^{\prime\prime}(p)=0$, then $f^{\prime}(p)=0.$\\
 \noindent $\boldsymbol H\boldsymbol 3$. Either $g^{\prime\prime}>\alpha>0$, for some $\alpha$ or 
 the zero of $g^\prime$ and $g^{\prime\prime}$ are the same.

\begin{theorem}{\label{m2}}
 Let $u_0\in L^\infty$. Let the fluxes satisfy $\boldsymbol H\boldsymbol 1$, $\boldsymbol H\boldsymbol 2$ and $\boldsymbol H\boldsymbol 3$ as above.
 Let the connection satisfies $A\neq \theta_g, B\neq \theta_f$, then 
   \begin{eqnarray}
TV(u(\cdotp,t), I(R(t))\cup I(L(t)))\leq C(t).\label{bv430}
\end{eqnarray}\end{theorem}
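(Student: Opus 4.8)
The plan is to work in the region $L(t)<x<0$ (the region $0<x<R(t)$ being symmetric, with the roles of $f$ and $g$ interchanged), and to use the explicit formula \eqref{bv25}. For $x\in(L(t),0)$ at a point of differentiability of $t_-,y_-$ we have the chain of identities
\begin{eqnarray*}
u(x,t)=g_-^{-1}\bigl(\max(-b^\prime_+(t_-(x,t)),g(A))\bigr),
\end{eqnarray*}
and since $x\mapsto t_-(x,t)$ is non-decreasing and $b^\prime_+$ is (by the explicit formula for $b'_+$) related to $f\circ(f')^{-1}$ of a monotone quantity, the composition $x\mapsto -b^\prime_+(t_-(x,t))$ is monotone in $x$ (non-increasing, say). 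Thus the only genuine source of unbounded variation of $u(\cdot,t)$ on $(L(t),0)$ is the failure of Lipschitz continuity of $g_-^{-1}$, i.e. the behaviour of $g$ near its minimum $\theta_g$. The first step is therefore to reduce the estimate of $TV(u(\cdot,t),I(L(t)))$ to an estimate of the variation of the monotone function $s\mapsto g_-^{-1}(\max(-b'_+(s),g(A)))$ over the range $s\in[0,t]$ (or rather over the range of $t_-(\cdot,t)$), picking up a harmless factor because $-b'_+$ has bounded variation as a monotone function and $\|b'_+\|_\infty\le C$ by the Lipschitz bound on $b_+$.

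The second step is the case distinction coming from $\boldsymbol H\boldsymbol 3$. If $g''>\alpha>0$ on all of $\mathbb{R}$, then $g_-^{-1}$ is globally Lipschitz with constant $\le 1/\alpha$ (property (7) in definition \ref{lt}), and since the inner argument is a bounded monotone function of $x$ with total variation controlled by $\|b'_+\|_\infty+|g(A)|$, we immediately get a bound $TV(u(\cdot,t),I(L(t)))\le C(t)$. The interesting case is when $g''$ vanishes somewhere; by $\boldsymbol H\boldsymbol 3$ this can only happen at points $p$ where $g'(p)=0$, i.e. only at $p=\theta_g$ (strict convexity forces $\theta_g$ to be the unique such point). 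Here the key point is the hypothesis $A\neq\theta_g$: then $g(A)>g(\theta_g)$ strictly, so $\max(-b'_+(s),g(A))\ge g(A)>g(\theta_g)$, hence the argument of $g_-^{-1}$ stays bounded away from $g(\theta_g)=\min g$ by a fixed gap $\delta:=g(A)-g(\theta_g)>0$. On the set $\{q:g(q)\ge g(\theta_g)+\delta\}\cap(-\infty,\theta_g]$ the function $g$ is $C^2$, strictly convex, with $g'$ strictly negative and bounded away from $0$ — so $g''$ need not be bounded below there, but $g_-^{-1}$ restricted to $[g(\theta_g)+\delta,\infty)$ is Lipschitz because its derivative is $1/g'(g_-^{-1}(\cdot))$, which is bounded wherever $g'$ is bounded away from zero. (One must check $g'$ is bounded away from $0$ on the relevant compact-from-the-right interval; this uses strict convexity plus the gap $\delta$, together with the a priori $L^\infty$ bound on $u$ which confines $g_-^{-1}(\cdot)$ to a bounded interval.) Combining, $g_-^{-1}\circ\max(-b'_+(\cdot),g(A))$ is Lipschitz in its argument on the relevant range, and since that argument is monotone in $x$ with bounded total variation, we again conclude $TV(u(\cdot,t),I(L(t)))\le C(t)$.

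The third step is bookkeeping: patch together the estimate on $(L(t),0)$ with the symmetric one on $(0,R(t))$, and absorb the contribution of the possible jump of $u$ at $x=0$ (controlled by $2\|u_0\|_\infty$, or rather by the $L^\infty$ bound on $u$) and at the endpoints $x=L(t),R(t)$. One also has to make sure the monotonicity claims for $t_\pm(\cdot,t)$, $b'_\pm$ and hence for the inner arguments are exactly the ones quoted in section 2 (the list of monotonicity properties in the "Characteristics" definition and the formulas \eqref{bv24}–\eqref{bv25}); this is where I would be most careful, because the direction of monotonicity of $-b'_+(t_-(x,t))$ and the interplay with the $\max$ determine whether the composition is genuinely monotone or only of bounded variation — in either case the argument goes through, but the constant changes.

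\textbf{Main obstacle.} The crux is the second step: showing that although $g$ may be "infinitely flat" at $\theta_g$ (so that $g_-^{-1}$ is \emph{not} globally Lipschitz and indeed $u(\cdot,t)$ can have unbounded variation far from the interface, as the counterexample in \cite{Ssbv} shows), near the interface the connection condition $A\neq\theta_g$ forces the relevant values of $u$ to stay in a region where $g$ is uniformly "non-flat," restoring a local Lipschitz estimate for $g_-^{-1}$. Making the gap $\delta=g(A)-g(\theta_g)$ do this work cleanly — in particular getting a lower bound on $|g'|$ on the relevant range that is uniform in $x$ and $t$ (so the final constant depends only on $t$, $\|u_0\|_\infty$, $f$, $g$, $A$, $B$) — is the technical heart of the argument.
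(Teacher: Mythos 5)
Your overall skeleton coincides with the paper's: work from the interface representation \eqref{bv25} (resp.\ \eqref{bv24}), and exploit $A\neq \theta_g$ (resp.\ $B\neq\theta_f$) to conclude that $g_-^{-1}$ (resp.\ $f_+^{-1}$) is Lipschitz on the relevant range, because its argument never drops below $g(A)>g(\theta_g)$ (resp.\ $f(B)>f(\theta_f)$). That part is correct and is exactly how the paper uses the non-critical connection.

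The genuine gap is the step you wave through: you assert that $x\mapsto -b'_+(t_-(x,t))$ is monotone, ``or in either case of bounded variation,'' and that this is harmless. Neither claim is justified, and the monotonicity claim is false in general. Indeed $-b'_+(s)=f\bigl((f')^{-1}(-y_+(s,u_0,f)/s)\bigr)$, and while $s\mapsto y_+(s,u_0,f)$ and $x\mapsto t_-(x,t)$ are monotone, the quotient $y_+(s)/s$ is not; moreover, since $g_-^{-1}$ is injective, monotonicity of your inner argument would force $u(\cdot,t)$ itself to be monotone on $(L(t),0)$, which there is no reason to expect. The $L^\infty$ bound on $b'_+$ coming from Lipschitz continuity of $b_+$ gives no control whatsoever on its variation, so the reduction ``the only source of unbounded variation is $g_-^{-1}$'' does not stand. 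This is precisely where the paper does its real work: it splits the interval at the point $\epsilon(t)$ beyond which $y_\mp(t_\pm(x,t),u_0,\cdot)$ vanishes (there the solution is constant, or is handled via the direct formula $(f')^{-1}(x/(t-t_+(x,t)))$), estimates the variation of the quotient $-y_\mp(t_\pm(x,t),u_0,\cdot)/t_\pm(x,t)$ by hand via a telescoping quotient estimate using the monotonicity of $y_\mp$ and $t_\pm$ separately (see \eqref{bv438}), and only then composes with $(g')^{-1}$, $g$ and $f_+^{-1}$. It is exactly at the composition with $(g')^{-1}$ and $(f')^{-1}$, on ranges bounded away from the argument $0$, that hypotheses $\boldsymbol H\boldsymbol 2$ and $\boldsymbol H\boldsymbol 3$ are invoked (see \eqref{bv437}, \eqref{bv443}). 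Your proposal never uses $\boldsymbol H\boldsymbol 2$ or $\boldsymbol H\boldsymbol 3$, which is a symptom of the missing step: without them the variation of $-b'_\pm(t_\mp(x,t))$ cannot be controlled, and these hypotheses of the theorem would be vacuous. Relatedly, you identify the ``main obstacle'' as the Lipschitz continuity of $g_-^{-1}$ near $\theta_g$; in the paper that is the one-line consequence of $A\neq\theta_g$, while the technical heart is the variation bound on the inner argument that you have assumed.
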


\begin{theorem}{\label{m1}}
 Let $u_0\in L^\infty$. Let the fluxes satisfy $\boldsymbol H\boldsymbol 1$, $\boldsymbol H\boldsymbol 2$ and $\boldsymbol H\boldsymbol 3$ as above.
 Let the connection satisfies $A=\theta_g$ or  $B= \theta_f$. If
 $f(\theta_f)\neq g(\theta_g),f(\theta_f)\neq g(0),f(0)\neq g(\theta_g),f(0)\neq g(0)$ and $\mbox{Supp}\ u_0 \subset [-M,M]$, for some $M>0,$
 then there exists $T>0$ such that for all $t>T,$
   \begin{eqnarray}
TV(u(\cdotp,t), I(R(t))\cup I(L(t)))\leq C(t).\label{bv531}
\end{eqnarray}\end{theorem}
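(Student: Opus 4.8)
The plan is to transfer the variation of $u(\cdot,t)$ on the two near-interface regions onto the boundary traces $\lambda_\pm$, and then --- using the compact support of $u_0$ together with the critical interface entropy condition (\ref{interface2}) --- to show that past a finite time these traces are pinned near a fixed finite list of constant states which the four non-degeneracy hypotheses keep away from the degenerate points $\theta_f,\theta_g$. For $x\in I(R(t))$ the explicit formula (\ref{bv624}) reads $u(x,t)=u(0+,t_+(x,t))=\lambda_+(t_+(x,t))$, where $x\mapsto t_+(x,t)$ is non-increasing and, since $f'(u(x,t))=x/(t-t_+(x,t))>0$, one has $u(x,t)>\theta_f$ on $I(R(t))$. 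As a monotone reparametrisation does not increase total variation,
\[ TV\big(u(\cdot,t),\,I(R(t))\big)\;\le\;TV\big(\lambda_+,\,J_+(t)\big),\qquad TV\big(u(\cdot,t),\,I(L(t))\big)\;\le\;TV\big(\lambda_-,\,J_-(t)\big), \]
where $J_+(t):=\{t_+(x,t):x\in I(R(t))\}\subset\{\tau\in(0,t):\lambda_+(\tau)>\theta_f\}$ and, symmetrically from (\ref{bv625}) and the monotonicity of $x\mapsto t_-(x,t)$, $J_-(t)\subset\{\tau\in(0,t):\lambda_-(\tau)<\theta_g\}$. Thus it is enough to bound $TV(\lambda_+,J_+(t))+TV(\lambda_-,J_-(t))$.

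Fix $\tau$ with $\lambda_+(\tau)>\theta_f$, so $f'(\lambda_+(\tau))>0$. Then (\ref{interface2}) forces $g'(\lambda_-(\tau))\ge0$, so the backward characteristic hitting $(0,\tau)$ enters from $x<0$, crosses the interface at time $\tau$ and continues into $x>0$; the chain in (\ref{bv624}) gives $\lambda_+(\tau)=f_+^{-1}(g(u_0(y_-(0-,\tau))))$ with $y_-(0-,\tau)\le0$ and $u_0(y_-(0-,\tau))=(g')^{-1}(-y_-(0-,\tau)/\tau)\ge\theta_g$. If $y_-(0-,\tau)<-M$ then $u_0$ vanishes there and $\lambda_+(\tau)=f_+^{-1}(g(0))$; if $-M\le y_-(0-,\tau)\le0$ then $-y_-(0-,\tau)/\tau\in[0,M/\tau]$, so $\lambda_+(\tau)$ lies in an interval that shrinks, as $\tau\to\infty$, to $\{B\}$ with $B:=f_+^{-1}(g(\theta_g))$. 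Symmetrically, whenever $\lambda_-(\tau)<\theta_g$ one gets $\lambda_-(\tau)=g_-^{-1}(f(u_0(y_+(0+,\tau))))$ with $y_+(0+,\tau)\ge0$, equal to $g_-^{-1}(f(0))$ if $y_+(0+,\tau)>M$ and otherwise lying in an interval shrinking to $\{g_-^{-1}(f(\theta_f))\}$. Hence there is $\tau_0>0$ such that, for $\tau\ge\tau_0$, $\lambda_+(\tau)$ (when $>\theta_f$) stays in a fixed small neighbourhood of $\{B,\,f_+^{-1}(g(0))\}$ and $\lambda_-(\tau)$ (when $<\theta_g$) stays in a fixed small neighbourhood of $\{g_-^{-1}(f(\theta_f)),\,g_-^{-1}(f(0))\}$.

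Now the hypotheses are used. Each of $B$, $f_+^{-1}(g(0))$, $g_-^{-1}(f(\theta_f))$, $g_-^{-1}(f(0))$ is distinct from $\theta_f$, from $\theta_g$ and from $0$: e.g.\ $B=\theta_f\Leftrightarrow f(\theta_f)=g(\theta_g)$, $f_+^{-1}(g(0))=\theta_f\Leftrightarrow f(\theta_f)=g(0)$, $g_-^{-1}(f(0))=\theta_g\Leftrightarrow f(0)=g(\theta_g)$, and the vanishing cases are excluded by $f(0)\neq g(0)$ and $f(0)\neq g(\theta_g)$. So, by the previous paragraph, there are compact sets $K\subset(\theta_f,\infty)$ and $K'\subset(-\infty,\theta_g)$ with $\lambda_+(\tau)\in K$ (when $>\theta_f$) and $\lambda_-(\tau)\in K'$ (when $<\theta_g$) for $\tau\ge\tau_0$; on $K$, hypothesis $\mathbf{H2}$ gives $f''\ge\alpha_K>0$ --- either globally, or because the unique zero of $f''$ is the zero $\theta_f$ of $f'$, which is excluded from $K$ --- hence $(f')^{-1}$ is Lipschitz on $f'(K)$ (property 7 in Definition \ref{lt}), and similarly $(g')^{-1}$ on $g'(K')$ by $\mathbf{H3}$. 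Since, in addition, $g\circ(g')^{-1}$ and $f\circ(f')^{-1}$ are Lipschitz near the origin (their derivatives $s\mapsto s\,(g^{*})''(s)$ and $s\mapsto s\,(f^{*})''(s)$ stay bounded there) and $\tau\mapsto-y_-(0-,\tau)/\tau$ and $\tau\mapsto-y_+(0+,\tau)/\tau$ are of bounded variation on $[\tau_0,t]$ (products of bounded monotone functions, the monotonicity coming from the no-crossing of characteristics), the restriction of $\lambda_+$ to $J_+(t)\cap[\tau_0,t]$ is --- apart from finitely many jumps between the two constant-state regimes --- a monotone reparametrisation of a Lipschitz function of a $BV$ argument, whence $TV(\lambda_+,J_+(t)\cap[\tau_0,t])\le C$, and likewise for $\lambda_-$. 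Finally, for $t>T$ with $T$ large, the characteristics still reaching $I(R(t))\cup I(L(t))$ at time $t$ are exactly those that bent at the interface after time $\tau_0$ --- the earlier ones having been absorbed into shocks --- so $J_\pm(t)\subset[\tau_0,t]$; combined with the first paragraph this yields (\ref{bv531}).

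The main obstacle is precisely this last claim: that there exists a finite $T=T(M,f,g)$ beyond which every backward characteristic reaching $I(R(t))\cup I(L(t))$ has bent at the interface after the fixed time $\tau_0$, so that only the regime of the two middle paragraphs survives. Proving it requires the fine monotonicity and no-crossing analysis of the characteristic curves from \cite{Kyoto,jde,Ssh}, and it is exactly this that fails without compact support --- in the counterexample of \cite{ghoshaljde}, fresh oscillation is injected at the interface along a sequence of times tending to infinity. The secondary difficulty, that $(f')^{-1},(g')^{-1}$ are not globally Lipschitz under $\mathbf{H2},\mathbf{H3}$, is exactly what the conditions $f(\theta_f)\neq g(\theta_g)$, $f(\theta_f)\neq g(0)$, $f(0)\neq g(\theta_g)$, $f(0)\neq g(0)$ are there to defuse, by confining the relevant boundary values to $K$ and $K'$, away from the degenerate points.
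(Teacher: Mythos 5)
Your overall strategy --- push the variation of $u(\cdot,t)$ onto the interface traces $\lambda_\pm$, use compact support plus large time to pin those traces near a finite list of constant states, and use the four inequalities $f(\theta_f)\neq g(\theta_g)$, $f(\theta_f)\neq g(0)$, $f(0)\neq g(\theta_g)$, $f(0)\neq g(0)$ to keep those states away from the points where $(f')^{-1},(g')^{-1}$ fail to be Lipschitz --- is the same as the paper's, and your middle paragraphs track the computations around (\ref{bv35})--(\ref{bv45}) quite closely. But the step you yourself flag as the ``main obstacle'' is a genuine gap, and the claim as you state it is not just unproven but false in general. You need $J_+(t)\subset[\tau_0,t]$, i.e.\ that every backward characteristic reaching $I(R(t))$ at a large time $t$ bent at the interface after $\tau_0$. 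Since $R(t)=\min\{x:t_+(x,t)=0\}$ and $x\mapsto t_+(x,t)$ is non-increasing, $t_+(x,t)$ tends to $t_+(R(t)-,t)$ as $x\to R(t)-$, and this limit is $0$ whenever the right edge of the interface-influenced region is a rarefaction rather than a shock (for constant boundary data the fan gives $J_+(t)=(0,t)$ for every $t$). So early-bending characteristics are not ``absorbed into shocks'' in general, and your pinning argument, which requires $\tau\to\infty$, says nothing about $\lambda_+(\tau)$ for those small $\tau$. The paper avoids needing any such claim by splitting $(0,R(t))$ into $(0,\epsilon(t))$ and $(\epsilon(t),R(t))$: on the outer piece, where $t_+$ is small, it abandons the trace and uses the direct formula $u=(f')^{-1}\bigl(x/(t-t_+(x,t))\bigr)$, whose argument is bounded below by $\epsilon(t)/t>0$, hence away from $f'(\theta_f)=0$, so $(f')^{-1}$ is Lipschitz there by $\boldsymbol H\boldsymbol 2$ with a $t$-dependent constant; the variation of $x/(t-t_+(x,t))$ is then controlled purely by the monotonicity of $t_+$ (see (\ref{bv47})--(\ref{bv51})). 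Without this second regime your reduction to $TV(\lambda_\pm,J_\pm(t))$ does not close.

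A secondary gap: to handle arguments $-y_-(0-,\tau)/\tau\in[0,M/\tau_0]$, which reach $0$, you invoke Lipschitz continuity of $g\circ(g')^{-1}$ near the origin via boundedness of $s\,(g^{*})''(s)$. Under $\boldsymbol H\boldsymbol 3$ alone, where $g''$ may vanish at $\theta_g$, this is not justified: $(g^{*})''(s)=1/g''((g')^{-1}(s))$ blows up at $s=0$ and the product need not stay bounded for a general $C^2$ strictly convex $g$. The paper sidesteps this as well: for each fixed $t$ it shows via (\ref{bv42}) that $-y_-(0-,t_+(x,t))/t_+(x,t)$ stays in a compact subinterval of $(0,\infty)$, where $(g')^{-1}$ is Lipschitz with a $t$-dependent constant, and only then composes with $g$ and with $f^{-1}$, the latter Lipschitz on $[g(\theta_g),g(\theta_g)+\delta_3]$ precisely because $g(\theta_g)>f(\theta_f)$. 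Finally, your two one-sided arguments silently assume that $R(t)>0$ and $L(t)<0$ cannot occur simultaneously in this critical-connection case; the paper states this as Case 3 and excludes it by the interface entropy condition (\ref{interface2}), and you would need to record that step too.
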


\begin{proof} (Proof of Theorem \ref{m2}).
 We consider the following three cases. \\
Case 1: $R(t)>0, L(t)=0$.\\
Case 2: $R(t)=0, L(t)<0$.\\
Case 3: $R(t)>0,L(t)<0$.\\

\noindent Case 1: Since the characteristics speed is positive, for $x\in(0,R(t))$, we have 
\begin{align}
 u(x,t)=&(f^\prime)^{-1}\left(\frac{x}{t-t_+(x,t)}\right)\label{bv431}\\
 =& \lambda_+(t_+(x,t))\label{bv432}\\
 =& f^{-1}_+(\mbox{max}(-b^\prime_-(t_+(x,t)),f(B))).\label{bv433}
\end{align}
Now $b^\prime_-$ satisfies the following relations
\begin{align}
 b^\prime_-(t_+(x,t))=\left\{\begin{array}{lll}
\displaystyle-g\left((g^\prime)^{-1}\left(-\frac{y_-(t_+(x,t),u_0,g)}{t_+(x,t)}\right)\right)  &\mbox{if}&y_-(t_+(x,t),u_0,g)<0,\\
-g(\theta_g) &\mbox{if}& y_-(t_+(x,t),u_0,g)=0,
                  \end{array}\right.\label{bv434}
\end{align}
where $t\mapsto y_-(t,u_0,g)$ is a non-increasing function.

Let us define $\epsilon(t)=\sup\{ x>0 \ : y_-(t_+(x,t),u_0,g)<0\}$. If $\epsilon(t)=0$, then from (\ref{bv432}), (\ref{bv433}) and (\ref{bv434}), we have
$u(x,t)=f^{-1}_+(\mbox{max}(g(\theta_g),f(B)))=\mbox{constant}$, for all $x\in (0,R(t))$, hence in $u\in BV(0,R(t)).$ Now we assume that 
$\epsilon(t)>0.$ Due to the monotonicity of $y_-$, for all $x\in(0,\epsilon(t))$
\begin{align}
 y_-(t_+(0+,t),u_0,g)\leq y_-(t_+(x,t),t),u_0,g)\leq y_-(t_+(\epsilon(t),t),u_0,g)<0. \label{bv435}
\end{align}
 First we prove the result in $(0,\epsilon(t))$ then in $(\epsilon(t),R(t)).$ 
 From the monotonicity of $y_-,t_+$, we conclude 
 \begin{align}
\begin{array}{llllllll}
    \displaystyle \frac{-y_-(t_+(0+,t),u_0,g)}{t_+(\epsilon(t),t)}&\geq& \displaystyle\frac{-y_-(t_+(x,t),u_0,g)}{t_+(\epsilon(t),t)}\\
   &\geq&\displaystyle \frac{-y_-(t_+(x,t),u_0,g)}{t_+(x,t)}\\
   &\geq& \displaystyle\frac{-y_-(t_+(\epsilon(t),t),u_0,g)}{t_+(x,t)}\\&\geq& \displaystyle\frac{-y_-(t_+(\epsilon(t),t),u_0,g)}{t}>0.
  \end{array}\label{bv436}
 \end{align}
From (\ref{bv436}), $\displaystyle \frac{-y_-(t_+(x,t),u_0,g)}{t_+(x,t)}$ is away from $0$, hence
\begin{align}
 (g^\prime)^{-1} \ \mbox{is Lipschitz continuous in} \ \left(\displaystyle\frac{-y_-(t_+(\epsilon(t),t),u_0,g)}{t},
 \displaystyle \frac{-y_-(t_+(0+,t),u_0,g)}{t_+(\epsilon(t),t)}
 \right).\label{bv437}
 \end{align}
  Let us choose a partition $\{x_i\}^{N}_{i=1}$ in $(0,\epsilon(t))$. In view of the fact that $t_+,y_-$ are monotone and using 
(\ref{bv435}), (\ref{bv436}) to obtain
\begin{equation}
 \begin{array}{lllllllllllll}
  \displaystyle\sum\limits_{i=1}^{N}\left|\frac{-y_-(t_+(x_{i+1},t),u_0,g)}{t_+(x_{i+1},t)}+\frac{y_-(t_+(x_i,t),u_0,g)}{t_+(x_i,t)}\right|\\
   \leq \displaystyle\sum\limits_{i=1}^{N}\frac{|y_-(t_+(x_{i+1},t),u_0,g)||t_+(x_{i},t)-t_+(x_{i+1},t)|}
  {|t_+(x_{i},t)t_+(x_{i+1},t)|}\\
  + \displaystyle\sum\limits_{i=1}^{N}\frac{|t_+(x_{i+1},t)|
  |y_-(t_+(x_{i+1},t),u_0,g)-y_-(t_+(x_{i},t),u_0,g)| }{|t_+(x_{i},t)t_+(x_{i+1},t)|}\\
  \displaystyle\leq \frac{|y_-(t_+(0+,t),u_0,g)||t-t_+(\epsilon(t),t)|}{|t_+(\epsilon(t),t)|^2}+\displaystyle\frac{t |y_-(t_+(0+,t),u_0,g)|}{|t_+(\epsilon(t),t)|^2}\\
  \displaystyle\leq \frac{2t|y_-(t_+(0+,t),u_0,g)|}{|t_+(\epsilon(t),t)|^2}.
 \end{array}\label{bv438}
\end{equation}
Since $B\neq \theta_f$, we conclude  $f_+(\lambda_+(t_+(x,t)))=(\mbox{max}(-b^\prime_-(t_+(x,t)),f(B)))\geq f(B)>f(\theta_f)$, therefore
\begin{equation}
 f_+^{-1} \ \mbox{is Lipschitz continuous in the interval} \ \big(f(B), \sup_{x\in (0,R(t)) }-b^\prime_-(t_+(x,t))\big).\label{bv440}
\end{equation}
Since $f,g\in C^2$   using (\ref{bv431}), (\ref{bv432}), (\ref{bv433}), (\ref{bv440}), (\ref{bv438}), we get 
\begin{align}
 \begin{array}{lllllllllllll}
  \displaystyle\sum\limits_{i=1}^{N}|u(x_{i+1},t)- u(x_{i},t)|\\
   = \displaystyle\sum\limits_{i=1}^{N}\left|f^{-1}_+(\mbox{max}(-b^\prime_-(t_+(x_{i+1},t)),f(B)))
   -f^{-1}_+(\mbox{max}(-b^\prime_-(t_+(x_i,t)),f(B)))\right|\\
   \leq C(t)\displaystyle\sum\limits_{i=1}^{N}\left|\mbox{max}(-b^\prime_-(t_+(x_{i+1},t)),f(B))
   -\mbox{max}(-b^\prime_-(t_+(x_i,t)),f(B))\right|\\
  \leq \displaystyle C(t)\sum\limits_{i=1}^{N}\left|b^\prime_-(t_+(x_{i+1},t))-b^\prime_-(t_+(x_i,t))\right|\\
  =\displaystyle C(t)\sum\limits_{i=1}^{N}\left|\frac{-y_-(t_+(x_{i+1},t),u_0,g)}{t_+(x_{i+1},t)}+\frac{y_-(t_+(x_i,t),u_0,g)}{t_+(x_i,t)}\right|\\
  \displaystyle\leq  C(t) \frac{2t|y_-(t_+(0+,t),u_0,g)|}{|t_+(\epsilon(t),t)|^2}.
 \end{array}\label{bv439}
\end{align}
Hence $u\in BV(0,\epsilon(t))$. Next, we prove the results in $(\epsilon(t),R(t))$. 	
Due to the monotonicity of $t_+$,  for $x\in (\epsilon(t),R(t))$, we have 
\begin{align}\label{bv441}
 \begin{array}{llll}
t_+(\epsilon(t),t)\geq  t_+(x,t)\geq t_+(R(t)-, t)\\ 
  t- t_+(\epsilon(t),t)\leq t- t_+(x,t)\leq t-t_+(R(t)-, t)<t,\\
 \end{array}
\end{align}
hence for some $C>0$,
  \begin{equation}\label{bv442}
   \begin{array}{lll}
   \displaystyle 0<\frac{\epsilon(t)}{t} \leq \frac{\epsilon(t)}{t-t_+(R(t)-, t)} \leq \frac{x}{t-t_+(x, t)}
\displaystyle \leq \frac{R(t)}{t-t_+(x, t)} &\leq& \displaystyle\frac{R(t)}{t-t_+(\epsilon(t), t)}\\ &\leq& \displaystyle \frac{Ct}{t-t_+(\epsilon(t), t)}.
   \end{array}
  \end{equation}
Whence from (\ref{bv442}), $\displaystyle \frac{x}{t-t_+(x, t)}$ is away from $0$, which allows 
\begin{align}\label{bv443}
 (f^\prime)^{-1} \ \mbox{is Lipschitz continuous in}\  \displaystyle\left[\frac{\epsilon(t)}{t}, \frac{Ct}{t-t_+(\epsilon(t), t)}\right]
\end{align}
and the Lipschitz constant depends on $t.$
Let us choose a partition $\{x_i\}^{N}_{i=1}$ in $(\epsilon(t),R(t))$. Because of the monotonicity of $t_+$, (\ref{bv441}) and (\ref{bv442}) 
we observe
\begin{align}\label{bv444}
 \begin{array}{lllllllllllllll}
 \displaystyle\sum\limits_{i=1}^{N} \left|\frac{x_{i+1}}{t-t_+(x_{i+1},t)}-\frac{x_{i}}{t-t_+(x_{i},t)}\right|\\
  \leq \displaystyle\sum\limits_{i=1}^{N} \frac{t|x_{i+1}-x_i| + |x_{i+1}t_+(x_i,t)-x_it_+(x_{i+1},t)|}{(t-t_+(x_{i+1},t))(t-t_+(x_{i},t))}\\
    \leq \displaystyle\sum\limits_{i=1}^{N} \frac{t|x_{i+1}-x_i| + |x_{i+1}t_+(x_i,t)-x_it_+(x_{i+1},t)|}{(t-t_+(\epsilon(t),t))^2}\\
 \leq \displaystyle\sum\limits_{i=1}^{N} \frac{t|x_{i+1}-x_i| +
 |x_{i+1}||t_+(x_i,t)-t_+(x_{i+1},t)|+t_+(x_{i+1},t)|x_{i+1}-x_i|}{(t-t_+(\epsilon(t),t))^2}\\
  \leq \displaystyle \frac{3R(t)t}{(t-t_+(\epsilon(t),t))^2}\\
    \leq \displaystyle \frac{3Ct^2}{(t-t_+(\epsilon(t),t))^2}.
 \end{array}
\end{align}
 (\ref{bv431}), (\ref{bv443}) and (\ref{bv444}) yields
\begin{equation}
 \begin{array}{lllllllllllll}
  \displaystyle\sum\limits_{i=1}^{N}|u(x_{i+1},t)- u(x_{i},t)|\\
  = \displaystyle\sum\limits_{i=1}^{N} \left|(f^\prime)^{-1}\left(\frac{x_{i+1}}{t-t_+(x_{i+1},t)}\right)
  -(f^\prime)^{-1}\left(\frac{x_{i}}{t-t_+(x_{i},t)}\right)\right|\\
  \leq \displaystyle\sum\limits_{i=1}^{N} C(t) \left|\frac{x_{i+1}}{t-t_+(x_{i+1},t)}
  -\frac{x_{i}}{t-t_+(x_{i},t)}\right|\\
    \leq \displaystyle \frac{3CC(t)t^2}{(t-t_+(\epsilon(t),t))^2}.
 \end{array}
\end{equation}
Hence for the Case 1, $u(\cdot,t)\in BV(0,R(t)).$


\noindent Case 2 and Case 3: When $L(t)<0$, one can prove the result exactly as $R(t)>0$. Therefore both the cases follows exactly like as Case 1.
Hence the Theorem.
\end{proof}

\begin{proof} (Proof of Theorem \ref{m1}).
Let us assume that $g(\theta_g)>f(\theta_f)$. Denote $0<\delta_1=g(\theta_g)-f(\theta_f)$. We have to consider the following three cases. 

\noindent Case 1: $R(t)>0, L(t)=0$.\\
\noindent Case 2: $R(t)=0, L(t)<0$.\\
\noindent Case 3: $R(t)>0,L(t)<0$.

\noindent Case 1: In this case,
\begin{equation}\label{bv31}
 u(x,t)=(f^\prime)^{-1}\left(\frac{x}{t-t_+(x,t)}\right)\ \mbox{for}\ x\in(0,R(t)).
\end{equation}

Let $\epsilon(t)>0$ be a small number such that $\epsilon(t)<R(t).$ First we prove the result in $(0,\epsilon(t))$ then in $(\epsilon(t),R(t)).$ Suppose there 
exist constants $T_0>0, C >0$ such that
\begin{equation}\label{bv32}
 |t_+(x,t)|<C, \ \mbox{for}\ t>T_0, x\in(0,R(t)).
\end{equation}
Since for  $t>T_0$, $\epsilon(t),t_+(x,t)$ are bounded, hence  from (\ref{bv31}), (\ref{bv32}) and the fact that the characteristics speed is 
positive for $x\in(0,R(t))$ there exists a small $\delta_2>0$, such that 
\begin{equation}\label{bv33}
 u(x,t)\in [\theta_f,\theta_f+\delta_2], \ \mbox{for} \ t>T_0,x\in (0,\epsilon(t)). 
\end{equation}
One can re-choose $T_0$ large, $\delta_2>0$ small such that $f(\theta_f+\delta_2)<g(\theta_g)$ and  (\ref{bv33}) still holds.
From (\ref{bv33}), it is clear that for  $t>T_0$,
\begin{equation}\label{bv34}
 u(0+,t)\in [\theta_f,\theta_f+\delta] \ \mbox{and so} \ f(u(0+,t))\in [f(\theta_f),g(\theta_g)) 
\end{equation}
 Therefore from (\ref{bv34}), it is easy to see that R-H condition $f(u(0+,t)=g(u(0-,t))$ does not hold for $t>T_0,$ which is a contradiction.
 Hence (\ref{bv32}) is false and so for $ x\in(0,\epsilon(t)),$
 \begin{align}
 \varlimsup\limits_{t\rightarrow \infty} t_+(x,t)=\infty . \label{bv35}
 \end{align}

From the R-H condition and the explicit formulas we have 
\begin{align}
 f(u(x,t))=f(u(0+,t_+(x,t)))&=g(u(0-,t_+(x,t)))\label{bv46}\\
 &=g\left(\left(g^\prime\right)^{-1}\left(-\frac{y_-(0-,t_+(x,t))}{t_+(x,t)}\right)\right)\label{bv36}\\
 &=g(u_0(y_-(0-,t_+(x,t)))).\label{bv37}
\end{align}
\par Now if for some $x_0\in (0,\epsilon(t))$, $y_-(0-,t_+(x,t))<-M,$
then by the monotonicity of $y_-$, $y_-(0-,t_+(x,t))<-M$ for all $x\in (0,x_0).$ Since
$\mbox{Supp}\ u_0\subset [-M,M]$ and (\ref{bv37}), we obtain 
$u(x,t)=f^{-1}g(0)$. Hence choosing $\epsilon(t)=x_0$, one has $u(\cdot,t)\in BV (0,\epsilon(t))$.
\par Let us assume the case when 
\begin{align}
 y_-(0-,t_+(x,t))\in [-M,0] \ \mbox{for all}\ x\in (0,\epsilon(t)).\label{bv38}
\end{align}
 Using (\ref{bv35}), (\ref{bv36}) and (\ref{bv38}),
 it is immediate that $\displaystyle-\frac{y_-(0-,t_+(x,t))}{t_+(x,t)}\rightarrow 0$ as $t\rightarrow\infty$.
 In view of the fact that the characteristic speed in positive in $(0,R(t))$, there exists a small $\delta_3>0$ and a large $T_0$ (re-choosing 
 the previous one), such that  for all $x\in(0,\epsilon(t)), t>T_0$
 \begin{align}
  g\left(\left(g^\prime\right)^{-1}\left(-\frac{y_-(0-,t_+(x,t))}{t_+(x,t)}\right)\right)\in [g(\theta_g), g(\theta_g)+\delta_3]. \label{bv39}
 \end{align}
Thanks to $g(\theta_g)>f(\theta_f)$ and (\ref{bv39}), we get 
\begin{align}\label{bv44}
 f^{-1}\ \mbox{is Lipschitz continuous in} \ [g(\theta_g), g(\theta_g)+\delta_3]
\end{align}
 and the Lipschitz constant depends on $t$.
Since $y_+,t_+$ are monotone functions we have the following relations 
\begin{eqnarray}
  t_+(\epsilon(t),t)\leq t_+(x,t)<t, \ \mbox{for all} \ x\in (0,\epsilon(t))\label{bv40}\\ 
 -M\leq  y_-(0-,t_+(x,t))\leq y_-(0-,t_+(\epsilon(t),t))\leq 0,\ \mbox{for all} \ x\in (0,\epsilon(t)) \label{bv41}\\ 
 0<-\frac{y_-(0-,t_+(\epsilon(t),t))}{t}< -\frac{y_-(0-,t_+(\epsilon(t),t))}{t_+(x,t)}\\\leq -\frac{y_-(0-,t_+(x,t))}{t_+(x,t)}\leq -\frac{y_-(0-,t_+(x,t))}{t_+(\epsilon(t),t)}
 \leq \frac{M}{t_+(\epsilon(t),t)},  \label{bv42}\\ \mbox{for all} \ x\in (0,\epsilon(t)).\nonumber
\end{eqnarray}
Therefore from (\ref{bv42}), it is clear that for a fixed $t>T_0$, $\displaystyle-\frac{y_-(0-,t_+(x,t))}{t_+(x,t)}$ is away from  $0$,
hence
\begin{align}\label{bv45}
 (g^\prime)^{-1}\ \mbox{is Lipschitz continuous
in } \ \displaystyle\left(-\frac{y_-(0-,t_+(\epsilon(t),t))}{t},\frac{M}{t_+(\epsilon(t),t)}\right)
\end{align}
and the Lipschitz constant depends on $t$.
Let us choose a partition $\{x_i\}^{N}_{i=1}$ in $(0,\epsilon(t))$. In view of the fact that $t_+,y_-$ are monotone and using 
(\ref{bv40}), (\ref{bv41}), (\ref{bv42}), to obtain
\begin{equation}
 \begin{array}{lllllllllllll}
  \displaystyle\sum\limits_{i=1}^{N}\left|-\frac{y_-(0-,t_+(x_{i+1},t))}{t_+(x_{i+1},t)}+\frac{y_-(0-,t_+(x_{i},t))}{t_+(x_{i},t)}\right|\\
  \leq \displaystyle\sum\limits_{i=1}^{N}\frac{|y_-(0-,t_+(x_{i+1},t))||t_+(x_{i},t)-t_+(x_{i+1},t)|}{|t_+(x_{i},t)t_+(x_{i+1},t)|}\\
  + \displaystyle\sum\limits_{i=1}^{N}\frac{|t_+(x_{i+1},t)||y_-(0-,t_+(x_{i+1},t))-y_-(0-,t_+(x_{i},t))| }{|t_+(x_{i},t)t_+(x_{i+1},t)|}\\
  \displaystyle\leq \frac{M |t-t_+(\epsilon(t),t)|}{|t_+(\epsilon(t),t)|^2} + \frac{Mt}{|t_+(\epsilon(t),t)|^2}\\
  \displaystyle \leq \frac{2Mt}{|t_+(\epsilon(t),t)|^2}
 \end{array}\label{bv43}
\end{equation}
As a results of (\ref{bv46}), (\ref{bv36}), (\ref{bv44}), (\ref{bv45}) and (\ref{bv43}), we get 
\begin{equation}
 \begin{array}{lllllllllllll}
  \sum\limits_{i=1}^{N}|u(x_{i+1},t)- u(x_{i},t)|\\
  = \sum\limits_{i=1}^{N}\left|f^{-1}g\left(\left(g^\prime\right)^{-1}\left(-\frac{y_-(0-,t_+(x_{i+1},t))}{t_+(x_{i+1},t)}\right)\right)
  -f^{-1}g\left(\left(g^\prime\right)^{-1}\left(-\frac{y_-(0-,t_+(x_{i},t))}{t_+(x_i,t)}\right)\right)\right|\\
   \leq  C_1(t)\sum\limits_{i=1}^{N}\left|g\left(\left(g^\prime\right)^{-1}\left(-\frac{y_-(0-,t_+(x_{i+1},t))}{t_+(x_{i+1},t)}\right)\right)
  -g\left(\left(g^\prime\right)^{-1}\left(-\frac{y_-(0-,t_+(x_{i},t))}{t_+(x_i,t)}\right)\right)\right|\\
  \leq  C_2 C_1(t)\sum\limits_{i=1}^{N}\left|(g^\prime)^{-1}\left(-\frac{y_-(0-,t_+(x_{i+1},t))}{t_+(x_{i+1},t)}\right)
  -(g^\prime)^{-1}\left(-\frac{y_-(0-,t_+(x_{i},t))}{t_+(x_i,t)}\right)\right|\\
  \leq  C_3(t)C_2 C_1(t)\sum\limits_{i=1}^{N}\left|\left(-\frac{y_-(0-,t_+(x_{i+1},t))}{t_+(x_{i+1},t)}\right)
  -\left(-\frac{y_-(0-,t_+(x_{i},t))}{t_+(x_i,t)}\right)\right|\\
  \leq  C_3(t)C_2 C_1(t) \frac{2M |t-t_+(\epsilon(t),t)|}{|t_+(\epsilon(t),t)|^2}.
 \end{array}
\end{equation}
Hence $u\in BV(0,\epsilon(t))$. Next, we prove the results in $(\epsilon(t),R(t))$. 	
Due to the monotonicity of $t_+$,  for $x\in (\epsilon(t),R(t))$, we have 
\begin{align}\label{bv47}
 \begin{array}{llll}
t_+(\epsilon(t),t)\geq  t_+(x,t)\geq t_+(R(t)-, t)\\ 
  t- t_+(\epsilon(t),t)\leq t- t_+(x,t)\leq t-t_+(R(t)-, t)<t.\\
 \end{array}
\end{align}
Whence
  \begin{equation}\label{bv48}
   \begin{array}{lll}
   \displaystyle 0<\frac{\epsilon(t)}{t} < \frac{x}{t-t_+(x, t)}< \displaystyle \frac{C_4t}{t-t_+(\epsilon(t), t)},
   \end{array}
  \end{equation}
  for some constant $C_4>0.$
For a fix $t>T_0$, it is clear from (\ref{bv48}) that $\displaystyle \frac{x}{t-t_+(x, t)}$ is away from $0$, which allows 
\begin{align}\label{bv49}
 (f^\prime)^{-1} \ \mbox{is Lipschitz continuous in}\  \displaystyle\left[\frac{\epsilon(t)}{t}, \frac{C_4t}{t-t_+(\epsilon(t), t)}\right]
\end{align}
and the Lipschitz constant depends on $t.$
Let us choose a partition $\{x_i\}^{N}_{i=1}$ in $(\epsilon(t),R(t))$. Because of the monotonicity of $t_+$, (\ref{bv47}) and (\ref{bv48}) 
we observe
\begin{align}\label{bv50}
 \begin{array}{lllllllllllllll}
 \displaystyle\sum\limits_{i=1}^{N} \left|\frac{x_{i+1}}{t-t_+(x_{i+1},t)}-\frac{x_{i}}{t-t_+(x_{i},t)}\right|\\
     \leq \displaystyle\sum\limits_{i=1}^{N} \frac{t|x_{i+1}-x_i| + |x_{i+1}t_+(x_i,t)-x_it_+(x_{i+1},t)|}{(t-t_+(\epsilon(t),t))^2}\\
 \leq \displaystyle\sum\limits_{i=1}^{N} \frac{t|x_{i+1}-x_i| +
 |x_{i+1}||t_+(x_i,t)-t_+(x_{i+1},t)|+t_+(x_{i+1},t)|x_{i+1}-x_i|}{(t-t_+(\epsilon(t),t))^2}\\
  \leq \displaystyle \frac{3R(t)t}{(t-t_+(\epsilon(t),t))^2}\\
    \leq \displaystyle \frac{3C_4t^2}{(t-t_+(\epsilon(t),t))^2}
 \end{array}
\end{align}
 (\ref{bv31}), (\ref{bv49}) and (\ref{bv50}) yields
\begin{equation}
 \begin{array}{lllllllllllll}\label{bv51}
  \displaystyle\sum\limits_{i=1}^{N}|u(x_{i+1},t)- u(x_{i},t)|\\
  = \displaystyle\sum\limits_{i=1}^{N} \left|(f^\prime)^{-1}\left(\frac{x_{i+1}}{t-t_+(x_{i+1},t)}\right)
  -(f^\prime)^{-1}\left(\frac{x_{i}}{t-t_+(x_{i},t)}\right)\right|\\
  \leq \displaystyle\sum\limits_{i=1}^{N} C_5(t) \left|\frac{x_{i+1}}{t-t_+(x_{i+1},t)}
  -\frac{x_{i}}{t-t_+(x_{i},t)}\right|\\
    \leq \displaystyle \frac{3CC_5(t)t^2}{(t-t_+(\epsilon(t),t))^2}.
 \end{array}
\end{equation}
Hence for the Case 1, $u(\cdot,t)\in BV(0,R(t)).$

\noindent Case 2: For $x\in (L(t),0)$, we have the following 
\begin{align}
 u(x,t)&=\left(g^\prime\right)^{-1}\left(\frac{x}{t-t_-(x,t)}\right)\label{bv52}\\
 &=u(0-,t_-(x,t))\label{bv53}\\
 &=g^{-1}f\left((f^\prime)^{-1}\left(-\frac{y_+(0+,t_-(x,t))}{t_-(x,t)}\right)\right)\label{bv54}\\
 &=g^{-1}f\left((u_0(y_+(x,t))\right)\label{bv55}.
\end{align}
We consider the following four subcases. 

\noindent Subcase I. For all $x\in (L(t),0)$, and  for all $t>T_0$,  $y_+(0+,t_-(x,t))\leq M$, $t_-(x,t)<C$, for some constant $C>0.$

\noindent Subcase II. For some $x_0\in (L(t),0)$,   $y_+(0+,t_-(x_0,t))> M$ and for all $t>T_0$,for all $x\in (L(t),0)$, $t_-(x,t)<C$,  
for some constant $C>0.$

\noindent Subcase III. For some $x_0\in (L(t),0)$,  for all $t>T_0$,  $y_+(0+,t_-(x_0,t))> M$ and for $x\in (L(t),0)$, 
$\varlimsup\limits_{t\rightarrow\infty}t_-(0-,t)=\infty$

\noindent Subcase IV. For all $x\in (L(t),0)$, for all $t>T_0$,  $y_+(0+,t_-(x,t))\leq M$ and  $\varlimsup\limits_{t\rightarrow\infty}t_-(0-,t)=\infty$.

\noindent Subcase I:  Since $t_-(x,t)<C$, whence for $t>T_0,$ we obtain
\begin{align}\label{bv56}
 u(0-,t)=\displaystyle \lim\limits_{x\rightarrow 0-} u(x,t)= \displaystyle \lim\limits_{x\rightarrow 0-} 
 \left(g^\prime\right)^{-1}\left(\frac{x}{t-t_-(x,t)}\right) =\left(g^\prime\right)^{-1}\left(0\right)=\theta_g.
\end{align}

In this Case 2, $R(t)=0.$ For $x>0,$ we have $\displaystyle u(x,t)=\left(f^\prime\right)^{-1}\left(\frac{x-y_+(x,t)}{t}\right)$. 

If  $y_+(0+,t)>M$ then due to the compact support of $u_0$, and monotonicity of $y_+$, 
\begin{align}\label{bv57}
 u(0+,t)=\displaystyle \lim\limits_{x\rightarrow 0+} u(x,t)= \displaystyle \lim\limits_{x\rightarrow 0+} 0=0. 
\end{align}
As a result of R-H condition, (\ref{bv56}) and (\ref{bv57}), we obtain $f(0)=g(\theta_g),$ which contradicts the assumption $f(0)\neq g(\theta_g).$
 \par If $y_+(0+,t)<M$, then there exists a large $T_0>0$ and a small $\delta>0$
 such that for $x\in (0,\epsilon(t))$
 \begin{align}\label{bv58}
 u(0+,t) \in [\theta_f-\delta,\theta_f].
 \end{align}
Since $\delta>0$ is small and $g(\theta_g)>f(\theta_f)$, (\ref{bv58}) violets the R-H condition $f(u(0+,t))=g(u(0-,t))$. Therefore the Subcase I,
can never occur. \\

\noindent SubCase II: Since for some $x_0\in (L(t),0)$, $y_+(0+,t_-(x_0,t))>M$. From the monotonicity of $y_+$ and Supp $u_0\subset [-M,M]$, we deduce
$u(x,t)=g^{-1}f(0),$ for $x\in(x_0,0)$, therefore $u\in BV(x_0,0)$. Then one can repeat the same argument as in (\ref{bv51}) 
to prove $u\in BV(L(t),x_0).$\\

\noindent Subcase III: Follows as in Subcase II.\\

\noindent Subcase IV: Since $y_+(0+,t_-(x,t))\leq M$, for $x\in (L(t),0),$ therefore from (\ref{bv54}) and similarly as in (\ref{bv58}), it contradicts
the R-H condition. Therefore the Subcase IV, can never occur.
\par This proves Case 2.

\noindent Case 3: This case is not allowed due to the interface entropy condition (\ref{interface2}). 

\par Similarly one can repeat the arguments above for the case when $f(\theta_f)>g(\theta_g).$

Hence the Theorem. 

\end{proof}




\begin{thebibliography}{99}












     
 
 \bibitem{ARGS} 
    \newblock Adimurthi,  R. Dutta,  S.S. Ghoshal and  G.D. Veerappa Gowda,
    \newblock  Existence and nonexistence of TV bounds for scalar conservation laws with discontinuous flux, 
    \emph{Comm. Pure Appl. Math.}, \textbf{64} (2011), 84--115.    
  
  

\bibitem{Ssbv} 
    \newblock Adimurthi, S. S. Ghoshal and G.D. Veerappa Gowda,
    \newblock Finer regularity of an entropy solution for $1$-$d$ scalar
conservation laws with non uniform convex flux, \emph{ Rend. Sem.
Mat. Univ. Padova}, \textbf{132} (2014), 1--24.

\bibitem{Ssh} 
\newblock Adimurthi, S. S. Ghoshal and G.D. Veerappa Gowda,
    \newblock Structure of an entropy solution of a scalar
conservation law with strict convex flux,
    \newblock  \emph{J. Hyperbolic Differ. Equ.}, \textbf{09} (2012), 571--611. 
    

    

\bibitem{Kyoto}
    \newblock Adimurthi and G.D. Veerappa Gowda,
    \newblock  Conservation laws with discontinuous flux, \emph{J. Math. Kyoto Univ.}, \textbf{1} (2003), 27--70.
 


\bibitem{siam} 
    \newblock Adimurthi, J. Jaffre and G.D. Veerappa Gowda,
    \newblock  Godunov type methods for scalar
 conservation laws with flux function discontinuous in the space variable,\emph{SIAM J. Numer. Anal.}, \textbf{1} (2004), 179--208.


\bibitem{jhde}
    \newblock  Adimurthi, S. Mishra and G.D. Veerappa Gowda, 
    \newblock Optimal entropy solutions for conservation laws
 with discontinuous flux-functions,
 \emph{J. Hyperbolic Differ. Equ.}, \textbf{4} (2005), 783--837.
 
 
 
\bibitem{jde} 
    \newblock Adimurthi, S. Mishra and G.D. Veerappa Gowda,
    \newblock  Explicit Hopf-Lax type formulas for Hamilton-Jacobi
 equations and conservation laws with discontinuous coefficients \emph{J. Differential Equations}, \textbf{1} (2007), 1--31.


 
\bibitem{mathcomp} 
\newblock Adimurthi, S. Mishra and G.D. Veerappa Gowda,
\newblock Convergence of Godunov type methods for a conservation law with a spatially varying discontinuous flux function, 
\emph{Math. Comp.}  \textbf{76}  (2007), 1219--1242.

\bibitem{And1} 
    \newblock B. Andreianov, K.H. Karlsen and N.H. Risebro,
    \newblock  A theory of $L^1$ - dissipative solvers for
 scalar conservation laws with discontinuous flux \emph{Arch. Ration. Mech. Anal.}, \textbf{1} (2011), 27--86.
 

\bibitem{And2} 
    \newblock B. Andreianov and C. Cances,
    \newblock The Godunov scheme for scalar conservation laws
 with discontinuous bell-shaped flux functions \emph{ Appl. Math. Lett.}, \textbf{25} (2012), 1844--1848.
 

\bibitem{karlsen} 
    \newblock  R. B\"{u}rger, A. Garc\'{\i}a, K. H. Karlsen and J. D. Towers,
    \newblock A family of numerical schemes for kinematic flows with discontinuous flux \emph{J. Engrg. Math.}, \textbf{60} (2008), 387--425.
 

\bibitem{burger}
    \newblock  R. B\"{u}rger, K.H. Karlsen, N.H. Risebro and J. D. Towers,
    \newblock  Well-posedness in $\textrm{BV}_t$ 
and convergence of a difference scheme for continuous sedimentation in ideal clarifier thickener units,  \emph{ Numer. Math.}, \textbf{1} (2004), 25--65.


\bibitem{burKarlKlinRis}
    \newblock R. B\"{u}rger, K.H. Karlsen, C. Klingenberg, and N.H. Risebro,
    \newblock A front tracking approach to a model of continuous sedimentation in ideal clarifier-thickener units, 
    \emph{Nonlin. Anal. Real World Appl.}, \textbf{4} (2003), 457--481.


 \bibitem{BurgKarRis} 
    \newblock   R. B\"{u}rger, K.H. Karlsen and N.H. Risebro,
    \newblock A relaxation scheme for
 continuous sedimentation in ideal clarifier-thickner units, \emph{Comput. Math. Applic.}, \textbf{50} (2005), 993--1009.
 

\bibitem{BurKarRisTow} 
    \newblock  R. B\"{u}rger, K.H. Karlsen, N.H. Risebro and J. D. Towers,
    \newblock  Monotone difference approximations
 for the simulation of clarifier-thickener units \emph{Comput. Visual. Sci.}, \textbf{6} (2004), 83--91.


 
\bibitem{BurKarTow} 
    \newblock R. B\"{u}rger, K.H. Karlsen and J. D. Towers,
    \newblock A mathematical model of continuous
 sedimentation of flocculated suspensions in clarifier-thickener units,  \emph{SIAM J. Appl. Math.}, \textbf{65} (2005), 882--940.
 


\bibitem{diehlSIAM}
    \newblock S. Diehl,
    \newblock    Dynamic and steady-state behaviour of continuous sedimentation, \emph{SIAM J. Appl. Math.}, \textbf{57} (1997), 991--1018.

 
\bibitem{diehlengrg} 
    \newblock S. Diehl,
    \newblock   Operating charts for continuous sedimentation II: Step responses, \emph{J. Engrg. Math.}, \textbf{53} (2005), 139--185.
 

\bibitem{gimseresebro} 
    \newblock  T. Gimse and N. H. Risebro,
    \newblock  Solution of the Cauchy problem for a conservation
 law with a discontinuous flux function,  \emph{SIAM J. Math. Anal.}, \textbf{23} (1992), 635--648.
 

 \bibitem{gimse} 
    \newblock  T. Gimse and N.H. Risebro,
    \newblock  Riemann problems with discontinuous flux function, 
    \emph{Proc. 3rd Internat. Conf. Hyperbolic Problems, Studentlitteratur, Uppsala}, (1990), 488--502.
   
   
   \bibitem{ghoshaljde} 
    \newblock S. S. Ghoshal,
    \newblock  Optimal 
results on TV bounds for scalar conservation laws with discontinuous flux,
    \emph{J. Differential Equations}, \textbf{3}, (2015), 980--1014.
    
    
\bibitem{ghoshal} 
    \newblock S. S. Ghoshal,
    \newblock  ``Finer analysis of characteristic curve and its application to exact, optimal controllability, structure of the entropy
solution of a scalar conservation law with convex flux,"
\newblock Ph.D thesis, TIFR, Centre for Applicable Mathematics,  2012.


\bibitem{duke} 
    \newblock K.T. Joseph and G.D. Veerappa Gowda,
    \newblock Explicit formula for solution of convex conservation
 laws with boundary condition, \emph{Duke Math. J.}, \textbf{62} (1991), 401--416.
 

\bibitem{geo} 
    \newblock E. Kaasschieter,
    \newblock Solving the Buckley-Leverret equation with gravity in a heterogeneous porous media,   \emph{Comput. Geosci.}, \textbf{3} (1999), 23--48 .

\bibitem{karlRisTowers} 
\newblock K.H. Karlsen, N.H. Risebro and J.D. Towers,
\newblock On a nonlinear degenerate parabolic
 transport diffusion equation with discontinuous coefficient, \emph{Electron. J. Differential Equations}
\textbf{93} (2002).


\bibitem{KlinRis} 
\newblock C. Klingenberg and N.H. Risebro,
\newblock Convex conservation laws with discontinuous coefficients, 
existence, uniqueness and asymptotic behavior, \emph{Comm. Partial Differential Equations}, \textbf{20} (1995), 1959-1990.


 \bibitem{kruzkov}
    \newblock S. N. Kru\v{z}kov,
    \newblock First order quasilinear equations 
with several independent variables. (Russian),\textit{Mat. Sb.}, 81:123 (1970), 228255. English transl.
 in \textit{Math. USSR Sb.}, \textbf{10} (1970), 217--243.

\bibitem{lax} 
    \newblock P. D. Lax,
    \newblock Hyperbolic systems of conservation laws II, \emph{ Comm. Pure Appl. Math.}, \textbf{10} (1957), 537--566.


 \bibitem{mochon} 
    \newblock S. Mochon,
    \newblock An analysis for the traffic on highways with changing surface conditions, \emph{Math. Model.}, \textbf{9} (1987),1--11.
 
 
    \bibitem{ostrov} 
    \newblock D.N. Ostrov,
    \newblock  Solutions of Hamilton-Jacobi equations and conservation laws with discontinuous
 space-time dependence, \emph{J. Differential Equations}, \textbf{182} (2002), 51--77.
     

\bibitem{Serre} 
    \newblock D. Serre,
    \newblock ``Syst\'emes De Lois De Conservation. I., Hyperbolicit\'e, Entropies, Ondes de Choc,"
Fondations \emph{Diderot Editeur, Paris},  (1996).


\bibitem{towers}
\newblock J.D. Towers,
    \newblock  Convergence of a difference scheme for conservation laws with a 
discontinuous flux,
      \emph{SIAM J. Numer. Anal.}, \textbf{38(2)} (2000),681--698 .


     
     
     
\end{thebibliography}
\end{document}